\documentclass[12pt,a4paper]{amsart}%Last modified Pablo 25.9.09

\usepackage[OT1]{fontenc}    %
\usepackage{type1cm}         %
\usepackage{amsthm}          %
\usepackage{amsmath}
\usepackage{amssymb}
\usepackage[dvips]{graphicx} % for graphics
\usepackage[bf]{caption}     % captionstyles
\usepackage{psfrag}          % replace text in figures
\usepackage{geometry}        % for good layout
\usepackage{version}         % include, exclude and comment

\numberwithin{equation}{section}

\theoremstyle{plain}
\newtheorem{theorem}{Theorem}[section]
\newtheorem{corollary}[theorem]{Corollary}
\newtheorem{proposition}[theorem]{Proposition}
\newtheorem{lemma}[theorem]{Lemma}

\theoremstyle{remark}
\newtheorem{remark}[theorem]{Remark}

\theoremstyle{definition}
\newtheorem{definition}[theorem]{Definition}

\newcommand{\HH}{\mathcal{H}}

\newcommand{\EE}{\mathbf{E}}
\newcommand{\PP}{\mathbf{P}}
\newcommand{\QQ}{\mathcal{Q}}
\newcommand{\CC}{\mathcal{C}}
\newcommand{\XX}{\mathcal{X}}
\newcommand{\YY}{\mathcal{Y}}
\newcommand{\R}{\mathbb{R}}

\newcommand{\eps}{\varepsilon}

\DeclareMathOperator{\Cor}{Cor}
\DeclareMathOperator{\dimb}{dim_B}
\DeclareMathOperator{\udimb}{\overline{dim}_B}
\DeclareMathOperator{\ldimb}{\underline{dim}_B}

\DeclareMathOperator{\dimh}{dim_H}

\DeclareMathOperator{\dist}{dist}

\begin{document}

\title[Visible parts of fractal percolation]
{Visible parts of fractal percolation}

\author[I. Arhosalo]{Ida Arhosalo$^1$}
\address{Department of Mathematics and Statistics,  P.O. Box 35,
         FIN-40014 University of Jyv\"askyl\"a, Finland$^1$}
\email{idmaarho@jyu.fi$^1$}

\author[E. J\"arvenp\"a\"a]{Esa J\"arvenp\"a\"a$^2$}
\address{Department of Mathematical Sciences, P.O. Box 3000,
         90014 University of Oulu, Finland$^{2,3}$}
\email{esa.jarvenpaa@oulu.fi$^2$}

\author[M. J\"arvenp\"a\"a]{Maarit J\"arvenp\"a\"a$^3$}
\email{maarit.jarvenpaa@oulu.fi$^3$}

\author[M. Rams]{Micha\l\,Rams$^4$}
\address{Institute of Mathematics, Polish Academy of Sciences, 00-956 Warsaw,
         Poland$^4$}
\email{rams@impan.pl$^4$}

\author[P. Shmerkin]{Pablo Shmerkin$^5$}
\address{School of Mathematics, Alan Turing Building, University of Manchester,
         M13 9PL, UK$^5$}
\email{Pablo.Shmerkin@manchester.ac.uk$^5$}

\subjclass[2000]{28A80}
\keywords{Visible part, fractal percolation, Hausdorff dimension}

\thanks{The authors acknowledge the support of the Centre of Excellence in
Analysis and Dynamics Research funded by the Academy of Finland. P.S. also
acknowledges support from EPSRC grant EP/E050441/1 and the University of
Manchester. }

\begin{abstract} We study dimensional properties of visible parts of
fractal percolation in the plane. Provided that the dimension of the fractal
percolation is at least 1, we show that, conditioned on non-extinction,
almost surely all visible parts from lines are 1-dimensional. Furthermore,
almost all of them have positive and finite Hausdorff measure. We also verify
analogous results for visible parts from points. These results are motivated
by an open problem on the dimensions of visible parts, see \cite{M2}.
\end{abstract}

\maketitle

\section{Introduction, notation and results}

\subsection{Visible parts}

The visible part of a compact set $E\subset \R^2$ from an affine line $\ell$
consists of those points $x\in E$ where one first hits the set
$E$ when looking perpendicularly from $\ell$. More precisely:

\begin{definition}\label{visibilitydef}
Let $E\subset\mathbb R^2$ be compact and let $\ell$ be an
affine line not meeting $E$. The \textbf{visible part} $V_\ell (E)$ of $E$ from
$\ell$ is
\[
V_\ell(E)=\{a\in E : [a,\Pi_\ell(a)]\cap E=\{a\}\}
\]
where $\Pi_\ell(a)$ is the projection of $a$ onto $\ell$ and
$[a,\Pi_\ell(a)]$ is the closed line segment joining $a$ to $\Pi_\ell(a)$.
Moreover, the visible part $V_x(E)$ of $E$ from a point
$x\in\mathbb R^2\setminus E$ is
\[
V_x(E)=\{a\in E : [a,x]\cap E=\{a\}\}.
\]
\end{definition}

In this paper we restrict our consideration to the planar case. Clearly,
Definition~\ref{visibilitydef} can be extended in a natural way to higher
dimensions, see \cite{JJMO}.
For a measure theoretic definition of visibility and related topics,
see \cite{Cs} and \cite{M2}.

The question of how the Hausdorff dimension, $\dimh$, of visible parts
depends on that of the original set has been considered in \cite{JJMO} and
\cite{O}.
In general, only ``almost all'' type of results are possible since there may be
exceptional directions, for example in the case of fractal graphs, see
\cite{JJMO}. Let $\mathcal L^n$ be the Lebesgue measure on $\mathbb R^n$.
There is a natural
Radon measure  $\Gamma$ on the space $\mathcal A$
of affine lines in the plane, that is, for all $A\subset\mathcal A$
\[
\Gamma(A)=\int\mathcal L^1(\{a\in L^\perp : L+a\in A\})
\,d\gamma(L),
\]
where $L$ is a line that goes through the origin, $L^\perp$ is the
orthogonal complement of $L$ and
$\gamma$ is the natural Radon measure on the space of all lines that go
through the
origin. Since every line through the origin can be parametrised by the angle
which it makes with the positive $x$-axis, the Lebesgue
measure $\mathcal L^1$ on the half open interval $[0,\pi)$ induces $\gamma$.

Let $E\subset\mathbb R^2$ be a compact set. The results in \cite{JJMO} for
dimensional properties of visible parts resemble the
Marstrand-Kaufman-Mattila -type projection results, according to which
\begin{equation}\label{projresult}
\dimh \Pi_L(E)=\min\{\dimh E,1\}
\end{equation}
for $\gamma$-almost all lines $L$ that go through the origin \cite{M1}.
For visible parts we have: if $\dimh E\le 1$ then
\begin{equation}\label{generalvisibility1}
\dimh V_\ell(E)=\dimh E\text{ and }\dimh V_x(E)=\dimh E
\end{equation}
for $\Gamma$-almost all affine lines $\ell$ not meeting $E$ and for
$\mathcal L^2$-almost all $x\in\mathbb R^2\setminus E$. On the other hand,
if $\dimh E>1$, then
\begin{equation}\label{generalvisibility2}
1\le\dimh V_\ell(E)\text{ and }1\le\dimh V_x(E)
\end{equation}
for  $\Gamma$-almost all affine lines $\ell$ not meeting $E$ and for
$\mathcal L^2$-almost all $x\in\mathbb R^2\setminus E$. These
results can be extended to higher dimensions by replacing 1 with $n-1$, see
\cite{JJMO}.

The methods utilised in \cite{JJMO} for proving \eqref{generalvisibility1}
and \eqref{generalvisibility2} are based on the generalized projection
formalism for parametrised families of transversal mappings due to Y. Peres and
W. Schlag \cite{PS}. The asymmetry between \eqref{projresult} and
\eqref{generalvisibility2} in the case $\dimh E>1$ is due to the following: in
\eqref{projresult} the upper
bound $\dimh \Pi_L(E)\le 1$ is trivial since $\Pi_L(E)$ is a subset of a line.
However, $V_\ell(E)$ does not have this restriction and a priori its dimension
could be as large as the dimension of $E$ (and indeed this can be the case, at
least for exceptional lines, as in the already mentioned example of fractal
graphs.)

The validity of the reverse inequality of \eqref{generalvisibility2} in general
is an open problem. In \cite{JJMO} it was verified for some concrete examples,
including quasi-circles and certain self-similar sets. In the planar case a
partial answer was given by T. C. O'Neil in \cite{O}.
Using energies, he showed that if a compact connected plane set $E$ has
Hausdorff dimension strictly larger than one, then visible parts from
almost all points
have Hausdorff dimension strictly less than the Hausdorff dimension of $E$.
In fact, for $\mathcal L^2$-almost all $x\in\mathbb R^2\setminus E$,
\[
\dimh V_x(E)\le\frac12+\sqrt{\dimh E-\frac34}.
\]

It is easy to see that $1$ is the only possible universal value for
Hausdorff dimension of typical visible parts of sets $E$ with $\dimh E>1$.
More precisely, if for all compact sets $E\subset\mathbb R^2$ with
$\dimh E>1$ there exists a constant $c$ such that $\dimh V_\ell(E)=c$
for almost all $\ell$, then $c=1$, see \cite {JJMO}.
In this paper we verify that this constancy result holds, in a strong form,
for typical random sets in fractal percolation.

\subsection{Fractal percolation} Fractal percolation is a natural model of
fractal sets that display stochastic self-similarity. Much is known about its
geometric properties, see
\cite{C} and \cite{G} and the references therein. We address the question of
studying dimensional properties of visible parts of
fractal percolation in the plane. It turns out that the reverse inequality in
\eqref{generalvisibility2} holds for \textit{all} lines almost surely
conditioned on non-extinction, in a strong quantitative form.
Moreover, the visible parts from almost every line have positive and finite
1-dimensional Hausdorff measure.  We underline that the methods we use
are different from those in \cite{JJMO} and \cite{O}.
Before stating the results, we recall the construction  of fractal percolation
and discuss some of its basic properties.

Fix $0<p<1$. We construct a random compact set as follows:
Let $Q_0=[0,1]\times [0,1]\subset\mathbb R^2$ be the unit square. Divide
$Q_0$ into four subsquares of equal size each of which
is chosen with probability $p$ and dropped with
probability $1-p$, independently of each other.
Denote by $\mathcal C_1$ the
collection of all chosen subsquares. For each
$Q\in\mathcal C_1$, we continue the same process by dividing $Q$ into four
subsquares of equal size. Again each of these subsquares is chosen with
probability $p$ and dropped with probability $1-p$, independently of each
other. The set of all chosen
squares at the second level is denoted by $\mathcal C_2$. Repeating
this process inductively gives the limiting random set $E$, defined as
\[
E = \bigcap_{n=1}^\infty \bigcup \{Q:Q\in\mathcal C_n\}.
\]
The probability space $\Omega$ is the space of all constructions and
the natural probability measure on $\Omega$ induced by this procedure is
denoted by $\PP$.

In \cite{CCD} J.T. Chayes, L. Chayes and R. Durrett verified that there is
a critical probability $0<p_c<1$ such that if $p<p_c$, then with probability
one $E$ is totally disconnected, whereas the opposing
sides of $Q_0$ are connected with positive probability provided that
$p>p_c$. This phenomenon is commonly referred to as fractal percolation.

We review some of the most basic facts on fractal percolation, and refer the
reader to \cite{G} or to \cite{C} for further background. Clearly, if $p<1$,
then there is a positive probability that the limit set $E$ is empty. A more
subtle question is for which values of $p$ the set $E$ is empty almost surely.
It turns out that
\[
\PP(E=\emptyset)=1\text{ if and only if }p\le\frac 14.
\]
Moreover, conditioned on non-extinction, that is $E\neq\emptyset$, we have
\[
\dimh E=\frac{\log(4p)}{\log2}
\]
almost surely. This implies that, conditioned on non-extinction, $\dimh E>1$
almost surely provided that $p>\frac 12$. In particular, when considering
dimensional properties of visible parts of $E$, we may restrict our
consideration to the case $p>\frac 12$, as the case $\frac 14<p\le\frac 12$ is
covered by the general equation \eqref{generalvisibility1}.

\begin{remark}\label{M2squares}
Instead of working with base 2 in the definition of fractal percolation one
could work with base $M$ for $M\ge2$, i.e. divide each square into $M^2$
subsquares of equal size and choose each of them with probability $p$ and
drop with probability $1-p$, independently of each other.
It is straightforward to see that all the results of this paper remain true
also in this case (with the threshold $p=\frac 12$ replaced by $p=\frac 1M$).
For notational simplicity we restrict our consideration to the case $M=2$.
\end{remark}

\subsection{Statement of results}

For a positive integer $k$, let $N_k(A)$ be the number of dyadic squares
of side length $2^{-k}$ that intersect a set $A\subset\mathbb R^2$. Recall that
the upper box dimension of a compact set $A$ is given by
\[
\overline{\dim}_B A=\limsup_{k\to\infty}\frac{\log N_k(A)}{\log 2^k}.
\]
Likewise one defines lower box dimension, and one says that the box dimension
exists, and is denoted by $\dim_B A$, if the lower and upper versions coincide.
We denote the 1-dimensional Hausdorff measure by $\mathcal H^1$. We now state
our main results.

\begin{theorem}\label{alllines}
Let $p>\frac12$. Conditioned on non-extinction, almost surely
\[
\dimh V_\ell(E)=\dimb V_\ell(E)=1
\]
for all lines $\ell$ not meeting $E$. Moreover, for any sequence $\{ a_k\}$
such that $\frac{a_k}k\rightarrow\infty$, one has almost surely that
\begin{equation} \label{growthsizecovering}
N_k(V_\ell(E))\le a_k 2^k
\end{equation}
simultaneously for all lines $\ell$ not meeting $E$ for all $k\ge K$.
Here $K$ depends on $E$, $\ell$ and the sequence $a_k$.
\end{theorem}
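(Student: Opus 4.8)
The plan is to prove the quantitative covering bound \eqref{growthsizecovering} first, and then deduce the dimension statements from it together with the general lower bound \eqref{generalvisibility2}. For the upper bound on $N_k(V_\ell(E))$, the key geometric observation is that the visible part from a line $\ell$ is "one-sided": for each direction (the direction perpendicular to $\ell$) and each line $L$ in that direction, $V_\ell(E)\cap L$ contains at most one point, namely the point of $E\cap L$ closest to $\ell$. Therefore, discretizing at scale $2^{-k}$, a dyadic square $Q$ of side $2^{-k}$ can meet $V_\ell(E)$ only if $Q\cap E\neq\emptyset$ and, roughly speaking, no square of $\mathcal C_k$ lies strictly between $Q$ and $\ell$ in the column of $Q$ (with respect to the $\ell$-direction). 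The strategy is to fix finitely many "approximate directions" — it suffices to take $\sim 2^k$ of them, evenly spaced, since changing the direction by $2^{-k}$ moves projections of points in $Q_0$ by $O(2^{-k})$ — so that controlling all lines $\ell$ reduces, up to a bounded multiplicative loss, to controlling the $\sim 2^k$ coordinate directions, and for each such direction controlling the $\sim 2^k$ dyadic "columns".

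For a fixed direction and a fixed dyadic column $S$ (a $2^{-k}\times 1$ strip, say, after rotating), I would estimate the expected number of level-$k$ squares $Q\in\mathcal C_k$ meeting $S$ that are "first" in their sub-column, i.e. visible. The heart of the matter is the following: conditioned on the construction, walk along the column from the side near $\ell$; a square $Q\in\mathcal C_k$ in the column is counted only if all the level-$k$ squares of $\mathcal C_k$ between it and $\ell$ (in the same fine row of the column) are absent. Since each level-$k$ square is in $\mathcal C_k$ with probability $p^k$, and these events have enough independence along the column, the expected number of visible level-$k$ squares in a single fine row of the column is bounded by a geometric-type sum $\sum_{j\ge 1} p^k (1-p^k)^{j-1} = 1$; summing over the $2^k$ fine rows gives expectation $O(2^k)$ per column, hence $O(2^k\cdot 2^k\cdot 2^k)=O(2^{3k})$... which is too weak. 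The fix is to not count at the finest scale: instead, for the chosen target bound $a_k 2^k$ with $a_k/k\to\infty$, I would work at an intermediate scale $2^{-m}$ with $m=m(k)$ chosen so that $p^{-m}$ is comparable to $a_k/k$ (roughly $m \asymp \log(a_k/k)$), count level-$m$ squares that survive and are "visibility-relevant" in each of the $2^k$ columns, bound their expected number in a column by something like $C 2^m p^{-m}\cdot(\text{survival})$, and note each contributes at most $2^{k-m}$ squares at scale $2^{-k}$; a Borel–Cantelli / exponential-moment argument (the relevant counts are sums of independent bounded random variables per column, so Hoeffding or a crude Markov bound on a high moment applies) then gives that the bound $a_k 2^k$ holds simultaneously for all $\sim 2^k$ columns and all $\sim 2^k$ directions for all large $k$, almost surely. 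The condition $a_k/k\to\infty$ is exactly what makes the union bound over the $\sim 4^k$ (directions)$\times$(columns) events summable.

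Finally, \eqref{growthsizecovering} immediately yields $\udimb V_\ell(E)\le 1$ for every $\ell$: given $\eps>0$ choose $a_k = k^2$ (say), so $N_k(V_\ell(E))\le k^2 2^k$ and $\log N_k/\log 2^k \to 1$. Since $V_\ell(E)\subset E$ is obviously non-empty and $\ldimb\ge\dimh$, and since $\dimh V_\ell(E)\ge 1$ holds for $\Gamma$-almost every $\ell$ by \eqref{generalvisibility2}, we get $\dimh V_\ell(E)=\dimb V_\ell(E)=1$ for $\Gamma$-a.e.\ $\ell$; a separate (simpler) argument, or the observation that the lower bound $1$ in fact propagates to all $\ell$ meeting $E$ in a "large" way, upgrades this to all lines — here one uses that for $p>1/2$, conditioned on non-extinction, $E$ almost surely has projections of positive length in every direction, forcing $\dimh V_\ell(E)\ge 1$ for every $\ell$. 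The main obstacle is the expected-count estimate in a single column with the right dependence on the intermediate scale $m$, i.e.\ making the geometric cancellation "at most one visible square per fine row, damped by survival probability" precise while retaining enough independence to run the concentration bound; the reduction from all lines to $O(2^k)$ directions and the final Borel–Cantelli step are comparatively routine.
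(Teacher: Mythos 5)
Your outer scaffolding --- discretize directions to a $2^{-n}$-net, bound a per-stripe covering count, union-bound over $\sim 4^n$ events, and let $a_n/n\to\infty$ make the sum converge for Borel--Cantelli, then read off $\udimb\le 1$ by taking $a_n=n^2$ --- matches the paper. But the heart of your argument rests on a geometric claim that is false, and the missing ingredient is exactly where all the work in the paper lies.

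You assert that a dyadic square $Q\in\mathcal{Q}_k$ can meet $V_\ell(E)$ only if no square of $\mathcal{C}_k$ lies between $Q$ and $\ell$ in its column, and you then compute a geometric-sum expectation as if each chosen square were opaque. This is not true: a chosen square $Q'\in\mathcal{C}_k$ sitting in front of $Q$ need not obscure $Q$, because $E\cap Q'$ may project onto only a tiny sliver of $\Pi_\ell(Q')$ --- indeed, with positive probability $Q'$ is eventually almost entirely emptied by the construction, or $E\cap Q'$ is a thin Cantor dust whose projection has large gaps. Rays from $\ell$ can pass straight through such ``windows'' and still reach $Q$. So your count of ``first chosen square in the column'' is not even an upper bound for the number of visibility-relevant squares. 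Your proposed intermediate-scale fix ($m=m(k)$ with $p^{-m}\asymp a_k/k$) does not cure this: whatever scale you count at, you still need to know that the chosen squares you are treating as blockers actually block. Nothing in your proposal establishes that, and one cannot get it from the product structure of $\PP$ alone; you need a projection theorem for fractal percolation.

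The paper's substitute for opacity is Theorem~\ref{RamsSimon} (the Rams--Simon result): conditioned on non-extinction, with uniformly positive probability $q=q_\eps$, the projection $\Pi_\ell(E)$ contains $\Pi_\ell(Q_0(\eps))$ simultaneously for all directions in a fixed cone $D$. This is what lets one define a ``block'': a chosen square $\widetilde Q\in\mathcal{Q}_{n-2}$ is a block if $\Pi_\ell(\widetilde Q(\tfrac18))\subset\Pi_\ell(\widetilde Q\cap E)$, and each chosen square independently is a block with probability $q>0$. A stripe is then cut off at the first ``non-corner'' chosen square inducing a block. The additional ``corner'' bookkeeping (Lemma~\ref{rarecorners} and the Azuma--Hoeffding bound of Lemma~\ref{azuma}) is forced by the geometry: if the stripe clips only a corner of $\widetilde Q$, then its projection lies outside $\Pi_\ell(\widetilde Q(\tfrac18))$ and the block property is useless; the lemmas guarantee that a definite proportion of the first $a_n$ chosen squares in the stripe are non-corners, so that with probability $1-e^{-\Omega(a_n)}$ one of them blocks. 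Once that is in place, the expected number of level-$n$ squares needed per stripe is $O(1)$, so the total is $O(2^n)$ --- no intermediate scale and no $2^{3k}$ overcount ever appears. Your lower bound discussion is directionally right (and the paper indeed gets $\dimh V_\ell(E)\ge 1$ for all $\ell$ from Theorem~\ref{RamsSimon} together with \cite{FG}), but as stated it is hand-waving; it is the same projection theorem doing the work there too. In short: without a Rams--Simon-type statement providing a positive density of genuinely opaque squares, your argument has no valid starting point, and this is a genuine gap, not a fixable technicality.
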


\begin{remark}\label{sizeofk}
For any closed $D\subset S^1$ with $D\cap\{(\pm 1,0),(0,\pm 1)\}=\emptyset$
one can choose uniform $K$ in \eqref{growthsizecovering} for all $\ell$
with $\ell\cap Q_0=\emptyset$ and $\theta(\ell)\in D$, where $\theta(\ell)$
is the angle between $\ell^\perp$ and the $x$-axis.
\end{remark}

We are also able to show that visible parts from a given line typically have
positive and finite length:

\begin{theorem}\label{maintheorem}
Let $\ell$ be any fixed line. Assume
that $p>\frac 12$. Then
\[
0<\mathcal H^1(V_\ell(E))<\infty
\]
almost surely conditioned on non-extinction and $E\cap\ell \neq\emptyset$.
\end{theorem}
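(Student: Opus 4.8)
The plan is to prove the upper bound $\mathcal{H}^1(V_\ell(E))<\infty$ via the covering estimate of Theorem~\ref{alllines}, and the positivity $\mathcal{H}^1(V_\ell(E))>0$ by exhibiting a non-trivial sub-portion of the visible part on which we have good control, using the one-dimensional "crossing" structure that survives in fractal percolation when $p>\tfrac12$.

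For the upper bound, fix a line $\ell$ with $\ell\cap Q_0\ne\emptyset$ (the case $\ell\cap Q_0=\emptyset$, conditioned on $E\cap\ell\ne\emptyset$, forces $\ell\subset Q_0^c$-side but still $E$ reaches it, so one reduces to lines that are essentially tangent; in any case one fixes $\ell$ once and for all). Apply Theorem~\ref{alllines} with, say, $a_k=\log k$: then almost surely, conditioned on non-extinction, $N_k(V_\ell(E))\le (\log k)\,2^k$ for all large $k$. A direct Vitali-type covering argument then gives, for the Hausdorff content at scale $2^{-k}$, a bound $\sum(\diam)^1\lesssim N_k(V_\ell(E))\cdot 2^{-k}\le \log k$, which does NOT tend to a finite limit. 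So instead one must be more careful: Theorem~\ref{alllines} alone with a slowly growing $a_k$ is not quite enough for finiteness of $\mathcal{H}^1$, so the real work is a quantitative refinement. I would prove directly, for a fixed line $\ell$ and fixed $p>\tfrac12$, that $\EE[N_k(V_\ell(E))]\le C\,2^k$ with $C$ independent of $k$, by analyzing, column by column in the direction perpendicular to $\ell$, the first surviving square one meets; summing a geometric-type series in the sub-level structure exploits $p>\tfrac12$. Then Fatou/Borel--Cantelli with this $L^1$ bound (together with the self-similar Markov structure of the construction to boost it from $L^1$ to an almost sure statement, via the martingale $Z_n = 2^{-n}N_n(V_\ell(E))/\text{something}$) yields $\mathcal{H}^1(V_\ell(E))<\infty$ a.s.

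For the lower bound $\mathcal{H}^1(V_\ell(E))>0$, the idea is to locate, with positive probability and then a.s.\ conditioned on non-extinction, a ``percolation crossing'' of $E$ in a direction transverse to $\ell$ whose image under the map $a\mapsto\Pi_\ell(a)$ (restricted to the visible part) is a set of positive length; since this projection is $1$-Lipschitz, $\mathcal{H}^1(V_\ell(E))\ge\mathcal{L}^1(\Pi_\ell(V_\ell(E)))>0$. Concretely, for $p>p_c$ the standard fractal-percolation crossing results (see \cite{CCD}, \cite{C}, \cite{G}) give, with probability bounded below uniformly at each level, a connected crossing of each sub-square from the ``near'' side to the ``far'' side relative to $\ell$; the near-side boundary of such a crossing is visible from $\ell$, and it projects onto a full sub-interval of $\ell$. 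Using independence across a suitable fixed-level grid and a Borel--Cantelli / $0$--$1$-law argument adapted to the tree structure, one upgrades ``positive probability of a crossing somewhere'' to ``a.s.\ on $E\cap\ell\ne\emptyset$ there is such a crossing projecting onto an interval''. One subtlety is that we need $p>\tfrac12$ (not merely $p>p_c$): the dimension being $\ge1$ is exactly what guarantees enough mass survives to make the visible part genuinely $1$-dimensional and the crossing arguments robust; when $p_c<p\le\tfrac12$ the set is covered by \eqref{generalvisibility1} instead.

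The main obstacle I anticipate is the uniform-in-$k$ expectation bound $\EE[N_k(V_\ell(E))]\le C\,2^k$ for the upper bound, and converting it into an almost-sure statement. Naively $\EE[N_k(E)]=(4p)^k$ grows like $2^{k\dimh E}$ with $\dimh E>1$, so the content must come entirely from the \emph{visibility} constraint: a square $Q$ of side $2^{-k}$ contributes to $V_\ell(E)$ only if no square in $\mathcal{C}_k$ strictly between $Q$ and $\ell$ survives, an event of probability $\le(1-p^{?})^{\sim 2^k}$ along the line of sight, which decays super-exponentially and exactly kills the extra $(2p)^k$ factor, leaving $\le C\,2^k$. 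Making this rigorous requires carefully handling the dependence between ``$Q$ survives'', ``$Q$'s line-of-sight to $\ell$ is clear'', and the nested conditioning; I would set it up using the natural filtration by levels and a supermartingale comparison. The passage to an a.s.\ bound, and hence to $\mathcal{H}^1<\infty$, then follows from the branching/self-similarity of the construction restricted below a fixed level, together with the extinction dichotomy.
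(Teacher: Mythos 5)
Your instinct that Theorem~\ref{alllines} alone cannot give finiteness, and that what is needed is a uniform expectation bound $\EE[N_n(V_\ell(E))]=O(2^n)$ followed by Fatou, is exactly right, and this is indeed the overall shape of the paper's argument. (Incidentally, no martingale ``boost'' is needed: once one has $\EE[S_n]=O(2^n)$ where $S_n$ counts covering squares, $\mathcal{H}^1(V_\ell(E))\le\liminf_n\sqrt2\,2^{-n}S_n$ and Fatou already give $\EE[\mathcal{H}^1(V_\ell(E))]<\infty$, hence a.s.\ finiteness.) However, the heuristic you give for why the expectation is $O(2^n)$ is incorrect in a way that matters. You assert that a level-$n$ square $Q$ contributes to $V_\ell(E)$ only if no square of $\mathcal{C}_n$ strictly between $Q$ and $\ell$ survives, with probability decaying like $(1-p)^{\sim 2^n}$. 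That is false: a surviving square $Q'$ between $Q$ and $\ell$ does not block $Q$, because $E\cap Q'$ generically projects onto a Cantor-like set with gaps, through which part of $Q$ remains visible. (If your heuristic were right, $\EE[N_n(V_\ell(E))]$ would tend to $0$ super-exponentially, contradicting $\Pi_\ell(V_\ell(E))=\Pi_\ell(E)$ having a.s.\ positive measure for $p>\tfrac12$.) The true obstruction is the notion of a \emph{block}: one needs, above each transversal interval $I$ of width $\eps 2^{-n}$, a chosen square whose grandparent $\widetilde Q$ actually satisfies $\Pi_\ell(\widetilde Q(\tfrac18))\subset\Pi_\ell(\widetilde Q\cap E)$, i.e.\ a full-projection event. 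That this has uniformly positive probability is precisely Theorem~\ref{RamsSimon} (Rams--Simon/Falconer--Grimmett, which is where $p>\tfrac12$ enters). A second subtlety you do not anticipate is the role of \emph{corners}: if a chosen square $Q$ sits in a corner of its grandparent $\widetilde Q$, then $\widetilde Q(\tfrac18)$ need not project over $I$, so a block at $\widetilde Q$ does not shade the rest of the column. Controlling the frequency of corners among successive chosen squares is exactly Lemma~\ref{rarecorners} together with the Azuma--Hoeffding estimate (Lemma~\ref{azuma}); this is the main technical work of the proof, and your outline has no substitute for it.

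For the lower bound, you propose to use connected percolation crossings and hence need $p>p_c$. This does not cover the stated range: for base $2$ fractal percolation $p_c$ is strictly larger than $\tfrac12$, so for $\tfrac12<p<p_c$ the set is a.s.\ totally disconnected and your crossing argument produces nothing, while the theorem still must hold. (Your remark that ``$p_c<p\le\tfrac12$ is covered by \eqref{generalvisibility1}'' has the inequality inverted and does not rescue the argument.) The paper instead derives positivity directly from Theorem~\ref{RamsSimon}: for $p>\tfrac12$, with positive probability $\Pi_\ell(E)$ contains a fixed interval, hence $\Pi_\ell(V_\ell(E))=\Pi_\ell(E)$ does too, and since $\Pi_\ell$ is $1$-Lipschitz this gives $\mathcal{H}^1(V_\ell(E))>0$; a standard $0$--$1$ argument then upgrades this to a.s.\ conditioned on non-extinction. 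This projection-based route is both simpler and works in the full range $p>\tfrac12$, whereas connectivity is neither available nor needed.
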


As an immediate consequence of Theorem~\ref{maintheorem} we have:

\begin{corollary}\label{aboveperco}
Let $p>\frac12$. Conditioned on non-extinction, almost surely
\[
0<\mathcal H^1(V_\ell(E))<\infty
\]
for almost all lines $\ell$ which do not meet the unit square.
\end{corollary}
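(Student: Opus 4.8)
The plan is to derive Corollary~\ref{aboveperco} from Theorem~\ref{maintheorem} by a Fubini-type argument over the space $\mathcal A$ of affine lines, equipped with the Radon measure $\Gamma$ described in the introduction. The subtlety is that Theorem~\ref{maintheorem} fixes a line $\ell$ and then asserts a probability-one statement in $\Omega$, whereas the Corollary fixes the construction (almost surely) and asserts a statement for $\Gamma$-almost every $\ell$. Swapping the order of ``for almost all $\ell$'' and ``almost surely'' is exactly what Fubini's theorem on the product space $\mathcal A\times\Omega$ is for.

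Concretely, I would first restrict attention to the (Borel) set $\mathcal A_0\subset\mathcal A$ of lines that do not meet the closed unit square $Q_0$; note that $\{\ell\in\mathcal A_0 : \theta(\ell)\in\{0,\pi/2\}\}$ has $\Gamma$-measure zero, so we may also discard the horizontal and vertical directions with no loss. Consider the set
\[
B=\{(\ell,\omega)\in\mathcal A_0\times\Omega : E(\omega)\neq\emptyset,\ E(\omega)\cap\ell\neq\emptyset,\ \text{and either } \mathcal H^1(V_\ell(E(\omega)))=0 \text{ or } \mathcal H^1(V_\ell(E(\omega)))=\infty\}.
\]
One must check that $B$ is measurable with respect to the product $\sigma$-algebra; this is routine once one observes that $(\ell,\omega)\mapsto V_\ell(E(\omega))$ behaves measurably (for instance, $N_k(V_\ell(E))$ is a Borel function of $(\ell,\omega)$ for each $k$, and $\mathcal H^1$ is a countable-limit construction, while the events $\{E\neq\emptyset\}$ and $\{E\cap\ell\neq\emptyset\}$ are clearly measurable). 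Theorem~\ref{maintheorem} says precisely that for each fixed $\ell\in\mathcal A_0$ with $\theta(\ell)\notin\{0,\pi/2\}$, the $\ell$-section $B_\ell=\{\omega:(\ell,\omega)\in B\}$ has $\PP(B_\ell)=0$. Hence by Fubini, $(\Gamma\times\PP)(B)=0$, and therefore for $\PP$-almost every $\omega$ the $\omega$-section $B^\omega=\{\ell:(\ell,\omega)\in B\}$ has $\Gamma(B^\omega)=0$.

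It remains to interpret this. Fix such a good $\omega$ with $E=E(\omega)\neq\emptyset$ (this occurs with positive probability, and conditioned on non-extinction it occurs almost surely). For $\Gamma$-almost all $\ell\in\mathcal A_0$ we then have: if $E\cap\ell\neq\emptyset$ then $0<\mathcal H^1(V_\ell(E))<\infty$. But $\Gamma$-almost every line $\ell$ not meeting $Q_0$ does satisfy $E\cap\ell=\emptyset$ automatically does not — rather, one should note that the lines $\ell$ with $E\cap\ell\neq\emptyset$ are a subset of those meeting $Q_0$, which have been excluded; so instead the correct reading is that we must also handle lines meeting $E$. Since $E\subset Q_0$ and $\mathcal A_0$ consists of lines disjoint from $Q_0$, every $\ell\in\mathcal A_0$ automatically satisfies $E\cap\ell=\emptyset$, so $V_\ell(E)$ is defined, and the conclusion $0<\mathcal H^1(V_\ell(E))<\infty$ must be obtained for such lines. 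One therefore applies Theorem~\ref{maintheorem} in the form: for a line $\ell$ disjoint from $E$ (automatic here), conditioned on $E\neq\emptyset$, almost surely $0<\mathcal H^1(V_\ell(E))<\infty$ — the hypothesis ``$E\cap\ell\neq\emptyset$'' in the theorem being relevant only for lines that might meet $Q_0$. Running the Fubini argument with $B$ redefined accordingly yields: for $\PP$-almost every $\omega$ with $E\neq\emptyset$, one has $0<\mathcal H^1(V_\ell(E))<\infty$ for $\Gamma$-almost every $\ell$ disjoint from $Q_0$, which is the assertion of the Corollary.

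The only genuine obstacle is the joint measurability of $(\ell,\omega)\mapsto\mathcal H^1(V_\ell(E(\omega)))$, together with the bookkeeping of which lines the hypotheses of Theorem~\ref{maintheorem} genuinely cover; everything else is a direct application of Fubini's theorem and the observation that ``conditioned on non-extinction almost surely'' and ``with positive probability'' generate the same null sets once we intersect with $\{E\neq\emptyset\}$. I expect the measurability to be dispatched in a sentence or two by noting that $V_\ell(E)$ is a decreasing limit of the compact sets $V_\ell(\bigcup\{Q:Q\in\mathcal C_n\})$ and that finite unions of dyadic squares depend measurably on $\omega$ and the covering numbers depend continuously on $\ell$ away from the excluded directions.
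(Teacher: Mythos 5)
Your overall strategy -- deduce the corollary from Theorem~\ref{maintheorem} via Fubini on the product of the line space with $\Omega$, after excluding the $\Gamma$-null set of horizontal/vertical directions and observing that $\ell\cap Q_0=\emptyset$ forces $\ell\cap E=\emptyset$ -- is exactly the paper's strategy, and your bookkeeping about the conditioning events (after the somewhat circuitous middle paragraph) comes out right. The genuine gap is in the measurability step, which you correctly flag as the only real obstacle but then propose to dispatch in a way that does not work: it is \emph{not} true that $V_\ell(E)$ is the decreasing limit of $V_\ell(E_n)$ where $E_n=\bigcup\{Q:Q\in\mathcal C_n\}$. The visibility operator is not monotone in its set argument -- a point $a\in E$ may be shielded in $E_n$ by squares that are deleted at a later generation (so $a\notin V_\ell(E_n)$ but $a\in V_\ell(E)$), and conversely a point visible in $E_n$ may not even lie in $E$. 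So $\mathcal H^1(V_\ell(E(\omega)))$ cannot be shown Borel by passing through the cylinder sets in this direct way, and the ``covering numbers depend continuously on $\ell$'' claim is also delicate for the same reason. The paper sidesteps this entirely: it never proves that $(E,\ell)\mapsto\mathcal H^1(V_\ell(E))$ is Borel. Instead, Lemma~\ref{measurability} shows that the combinatorial upper bound $\widetilde S_n(E,\ell)$ on the covering number -- defined by counting blocks, windows, and corners -- is a Borel function of $(E,\ell)$. This works because a ``block'' is defined via the projection $\Pi_\ell(Q\cap E)$, and $\Pi_\ell$ \emph{does} commute with decreasing intersections of nonempty compacta (this is exactly the argument given in the proof of Lemma~\ref{measurability}), whereas $V_\ell$ does not. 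One then concludes that $\{(E,\ell):0<\mathcal H^1(V_\ell(E))<\infty\}$ \emph{contains} a Borel set of full $\Gamma\times\PP$-measure -- built from $\liminf_n 2^{-n}\widetilde S_n<\infty$ on the finiteness side, and Theorem~\ref{RamsSimon} on the positivity side -- which is enough for Fubini by completeness of the measure. You should replace your sketch of the measurability argument with this block-counting route.
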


We do not know whether the exceptional set
$\{E:\mathcal H^1(V_\ell(E))=\infty\}$ in Theorem~\ref{maintheorem} depends on
$\ell$.

Above results concern visible parts from lines. Similar results are available
for visible parts from points; see Theorems~\ref{allpoints} and \ref{points}
 in Section~\ref{pointssection}.

\subsection{Notation and organization}

We henceforth fix a value of $p\in (\frac 12,1)$ for the rest of the paper.
We will use the $O(\cdot), \Omega(\cdot)$ notation: if $x,y$ are two positive
quantities, by $x=O(y)$ we mean that $x\leq C y$ for some constant $C$, and by
$x=\Omega(y)$ we mean $y=O(x)$. The implicit constant may depend only on $p$.
In particular, if the quantities $x,y$ are related to a stage $n$ of the
construction of fractal percolation, then the implicit constant is independent
of $n$.

The paper is organized in the following manner: in the next section we
verify crucial technical lemmas, in Section~\ref{linessection} we prove our
main theorems concerning visible parts from lines, and in the last section we
study visible parts from points.

\section{Technical lemmas}

In this section we verify some lemmas needed in the proof of our main theorems.
We start by showing that in Theorems~\ref{alllines} and \ref{maintheorem} it is
enough to consider lines that do not meet the closed unit square $Q_0$. For all
positive integers $n$, we will denote the set of all dyadic subsquares
of $Q_0$ of side length $2^{-n}$ by $\mathcal Q_n$. Recall that $\mathcal{C}_n$
is the random subset of $\mathcal Q_n$ consisting of the chosen squares of
side length $2^{-n}$. Throughout the paper, by a square we mean a closed dyadic
square with sides parallel to the axes.

\begin{lemma} \label{enoughnotmeetingsquare}
In Theorem~\ref{alllines}, it is enough to prove the statement for all lines
not meeting $Q_0$. Likewise, in Theorem~\ref{maintheorem} one may assume that
$\ell\cap Q_0=\emptyset$ (in which case $\ell\cap E=\emptyset$ automatically
and one does not need to condition on this.)
\end{lemma}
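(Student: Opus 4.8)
The two theorems each split into an \emph{upper} part ($\udimb V_\ell(E)\le1$ together with \eqref{growthsizecovering}; and $\mathcal H^1(V_\ell(E))<\infty$) and a \emph{lower} part ($\dimh V_\ell(E)\ge1$; and $\mathcal H^1(V_\ell(E))>0$), and only the upper parts will need $\ell\cap Q_0=\emptyset$. For the lower parts I would first record the elementary identity $\Pi_\ell(V_\ell(E))=\Pi_\ell(E)$, valid for \emph{every} line $\ell$ disjoint from $E$: given $a\in E$, the point $a'$ of the compact set $[a,\Pi_\ell(a)]\cap E$ nearest $\ell$ lies in $V_\ell(E)$ and has $\Pi_\ell(a')=\Pi_\ell(a)$. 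Since $\Pi_\ell$ is $1$-Lipschitz this gives $\dimh V_\ell(E)\ge\dimh\Pi_\ell(E)$ and $\mathcal H^1(V_\ell(E))\ge\mathcal L^1(\Pi_\ell(E))$. For $p>\frac12$ we have $\dimh E>1$ conditioned on non-extinction, and it is a known projection property of fractal percolation that then $\dimh\Pi_\ell(E)=1$ almost surely, \emph{simultaneously for all} $\ell$ (this can also be extracted from the self-similarity scheme below); and for a fixed $\ell$ the projection theorem gives $\mathcal L^1(\Pi_\ell(E))>0$ almost surely. Hence the lower parts hold for all admissible $\ell$ with no restriction, and — using $\dimh\le\ldimb\le\udimb$ — it suffices to prove, for lines with $\ell\cap Q_0=\emptyset$, the upper parts only.

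Now fix $\ell$ with $\ell\cap E=\emptyset$ but $\ell\cap Q_0\neq\emptyset$. Since $\ell\cap Q_0$ and $E$ are disjoint compacta and $\bigcup\{Q:Q\in\mathcal C_n\}$ decreases to $E$, the compacta $\ell\cap\bigcup\mathcal C_n$ decrease to $\ell\cap E=\emptyset$, so $\ell\cap\bigcup\mathcal C_{n_0}=\emptyset$ for some (random) $n_0=n_0(E,\ell)$. If $a\in E\cap Q$ with $Q\in\mathcal C_{n_0}$ and $[a,\Pi_\ell(a)]\cap E=\{a\}$, then a fortiori $[a,\Pi_\ell(a)]\cap(E\cap Q)=\{a\}$, so $V_\ell(E)\subseteq\bigcup_{Q\in\mathcal C_{n_0}}V_\ell(E\cap Q)$. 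For each dyadic square $Q$ of side $2^{-m}$ let $\phi_Q\colon Q\to Q_0$ be the orienting homothety of ratio $2^m$; conditioned on $Q\in\mathcal C_m$, the set $E_Q:=\phi_Q(E\cap Q)$ is a fractal percolation with the law of $E$, independent over disjoint $Q$, while $E_Q=\emptyset$ if $Q\notin\mathcal C_m$. Since visibility is similarity-invariant, $\phi_Q(V_\ell(E\cap Q))=V_{\phi_Q(\ell)}(E_Q)$, and $\phi_Q(\ell)\cap Q_0=\emptyset$ because $\ell\cap Q=\emptyset$. On the almost sure event that for every dyadic square $Q$ either $E_Q=\emptyset$ or the assumed clear-line conclusions hold for $E_Q$ (a countable intersection of almost sure events, applying the clear-line statements to the copies $E_Q$), every $V_\ell(E\cap Q)$ with $Q\in\mathcal C_{n_0}$ inherits the required estimates.

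It remains to assemble the finitely many pieces. For box dimension, $\udimb V_\ell(E)\le\max_{Q\in\mathcal C_{n_0}}\udimb V_{\phi_Q(\ell)}(E_Q)=1$. For \eqref{growthsizecovering}, given $\{a_k\}$ with $a_k/k\to\infty$ I would set $b_j:=a_{j+n_0}/(2\,\#\mathcal C_{n_0})$, which still satisfies $b_j/j\to\infty$; since a dyadic square of side $2^{-k}$ inside $Q$ corresponds under $\phi_Q$ to one of side $2^{-(k-n_0)}$, one gets $N_k(V_\ell(E\cap Q))=N_{k-n_0}(V_{\phi_Q(\ell)}(E_Q))\le b_{k-n_0}2^{k-n_0}$ for $k$ large, hence
\[
N_k(V_\ell(E))\le\sum_{Q\in\mathcal C_{n_0}}N_k(V_\ell(E\cap Q))\le\#\mathcal C_{n_0}\cdot b_{k-n_0}\,2^{k-n_0}=\tfrac12\,a_k\,2^{k-n_0}\le a_k2^k
\]
for all $k\ge K:=n_0+\max_{Q\in\mathcal C_{n_0}}K_Q$, where $K_Q$ is the threshold supplied by the clear-line statement for $(E_Q,\phi_Q(\ell),\{b_j\})$; thus $K$ depends only on $E$, $\ell$, $\{a_k\}$. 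Finally $\mathcal H^1(V_\ell(E))\le\sum_{Q\in\mathcal C_{n_0}}\mathcal H^1(V_\ell(E\cap Q))=2^{-n_0}\sum_{Q\in\mathcal C_{n_0}}\mathcal H^1(V_{\phi_Q(\ell)}(E_Q))<\infty$, a finite sum of almost surely finite terms.

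The step I expect to be the main obstacle is the careful interplay of statistical self-similarity with the countable-intersection argument: the clear-line statements must be phrased as properties of the \emph{law} of fractal percolation so that they transfer to every sub-copy $E_Q$ at once, one must verify that $n_0<\infty$ almost surely while claiming nothing on the exceptional null set, and for Theorem~\ref{maintheorem} one uses that, as $\ell$ is a single fixed line, $\{\phi_Q(\ell):Q\text{ dyadic}\}$ is countable, which is what promotes the ``fixed clear line'' statement to the random line $\phi_Q(\ell)$. The one genuinely non-formal input is the ``all directions'' lower bound $\dimh\Pi_\ell(E)\ge1$ for fractal percolation; if one prefers to avoid quoting it, it can be recovered from $\Pi_\ell(V_\ell(E))=\bigcup_{Q\in\mathcal C_{n_0}}\Pi_{\phi_Q(\ell)}(E_Q)$ by the same decomposition, at the cost of still needing an ``all directions'' projection lower bound for a single copy.
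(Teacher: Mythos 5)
Your decomposition $V_\ell(E)\subset\bigcup_{Q\in\mathcal C_{n_0}}V_\ell(E\cap Q)$, with $n_0$ the first generation at which $\ell$ misses $\bigcup\mathcal C_{n_0}$, together with the rescaling via the homotheties $\phi_Q$, is precisely the self-similarity argument in the paper's proof, so your overall route is the right one. There is, however, a gap in how the countable intersection of almost-sure events is set up. The covering estimate \eqref{growthsizecovering} is an almost-sure statement \emph{for each fixed deterministic sequence}, so ``the assumed clear-line conclusions hold for $E_Q$'' is not one well-defined event: you must name, for each copy $E_Q$, a concrete deterministic sequence before you can intersect over $Q$. You instead apply the conclusion to $E_Q$ with $b_j=a_{j+n_0}/(2\,\#\mathcal C_{n_0})$, but $\#\mathcal C_{n_0}$ is a random variable (determined only after the almost-sure event is supposed to have been fixed), so the event ``\eqref{growthsizecovering} holds for $E_Q$ with the sequence $\{b_j\}$'' is not among those in your countable intersection. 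The paper avoids this by making the rescaled sequence depend only on the deterministic level $n$ of $Q$: it defines $A_Q$, for $Q\in\mathcal Q_n$, as the event that $V_\ell(E\cap Q)$ is covered by $4^{-n}a_k2^k$ squares of side $2^{-k}$, and intersects over all $n$ and all $Q\in\mathcal Q_n$; then the crude bound $\#\mathcal C_{n_0}\le 4^{n_0}$ does the rest. Your argument is repaired the same way: replace $b_j$ by the deterministic $c^{(n)}_j=a_{j+n}/(2\cdot4^{n})$ (which is $\le b_j$ since $\#\mathcal C_{n_0}\le 4^{n_0}$) and intersect over $n$ and $Q\in\mathcal Q_n$.

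A secondary point concerns the lower bounds. Your identity $\Pi_\ell(V_\ell(E))=\Pi_\ell(E)$ is clean and correct, and for Theorem~\ref{alllines} the lower bound indeed needs no reduction (the paper handles it directly via Theorem~\ref{RamsSimon} and \cite{FG}). But for Theorem~\ref{maintheorem} you assert $\mathcal L^1(\Pi_\ell(E))>0$ almost surely conditioned on non-extinction, whereas that theorem also conditions on $\ell\cap E=\emptyset$; for a line $\ell$ meeting $Q_0$ this extra conditioning changes the law of $E$, and you would need to verify the projection statement survives it. The paper sidesteps this entirely by running the lower bound through the same reduction: once $\ell\cap Q_0=\emptyset$ the conditioning on $\ell\cap E=\emptyset$ is vacuous, so it is simpler to reduce the full statement, not only the upper part.
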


\begin{proof}
We present the argument for Theorem~\ref{alllines}; for Theorem
\ref{maintheorem} it is analogous.

Assume that \eqref{growthsizecovering} holds for all lines not meeting $Q_0$
and fix a sequence $a_k$ with $\frac{a_k}k\rightarrow\infty$. Given a dyadic
square
$Q\in\QQ_n$, let $A_Q$ be the event ``for every line $\ell$ not meeting $Q$,
the visible part $V_\ell(E\cap Q)$ can be covered by $4^{-n}a_k2^k$ dyadic
squares of side-length $2^{-k}$, for all large enough $k$''. By our assumption
for the sequence $4^{-n}a_k$ and the self-similarity of $E$, each $A_Q$ has
full probability, and so does the event
\[
A = \bigcap_{n=1}^\infty \bigcap_{Q\in\mathcal{Q}_n} A_Q.
\]

On the other hand, a line $\ell$ does not meet $E$ if and only if there is $n$
such that $\ell$ does not meet any square in $\mathcal{C}_n$. Clearly, if
$\ell$ is such a line, then
\[
V_\ell(E) \subset \bigcup_{Q\in\CC_n} V_\ell(Q\cap E).
\]
This inclusion shows that \eqref{growthsizecovering} holds whenever $A$ holds,
and thus it is an almost sure event.

The assertions on the Hausdorff and box dimensions follow easily from
\eqref{growthsizecovering}; see the proof of Theorem~\ref{alllines}.
\end{proof}

In the light of the previous lemma, we may assume that the line $\ell$ does not
meet $Q_0$. Horizontal and vertical lines are exceptional, and are easier to
handle; see \cite{J} for the proof of Theorem~\ref{maintheorem} in this case
(a slightly weaker version of Theorem~\ref{alllines} is also proved there; the
full version follows using the large deviation ideas used in this article).
Therefore from now on we will focus on the transversal case. We assume that
$\ell$ is of the form $y=-tx-a$, where $t,a>0$, since the other cases follow
by symmetry. Such a line will be fixed for the rest of this section.

Given $0<\eps<\frac 12$, we associate a set $Q(\eps)$ to each square $Q$ of
side
length $a$ as follows: $Q(\eps)$ is obtained by removing from $Q$ the half-open
squares of side length $\eps a$ from the upper left and the lower right
corners, see Figure~\ref{fig-block}. (For lines of positive slope, one would
need to remove the lower left and the upper right corners.)

The following theorem from \cite{RS} will play a crucial role in our study.
Recall that $Q_0$ denotes the closed unit square.

\begin{theorem}\label{RamsSimon}
Let $D\subset S^1$ be a closed connected arc such that
\[
D\cap\{(\pm 1,0),(0,\pm 1)\}=\emptyset.
\]
Then for any $0<\varepsilon<\frac 12$ there exists $q_\varepsilon>0$ such that
\[
\PP(\Pi_\ell(E)\supset\Pi_\ell(Q_0(\eps))\text{ for all }\ell\text{ with }
    \theta(\ell)\in D)=q_\varepsilon.
\]
Here $\theta(\ell)$ is the angle between $\ell^\perp$ and the $x$-axis.
\end{theorem}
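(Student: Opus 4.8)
The plan is to prove Theorem~\ref{RamsSimon} by exhibiting a self-similar ``percolation survival'' structure: show that with positive probability the first-level configuration together with recursive survival in certain subsquares forces $\Pi_\ell(E) \supset \Pi_\ell(Q_0(\eps))$ uniformly over the compact arc $D$, and then argue that this positive probability is exactly some fixed number $q_\eps$ by a zero-one type dichotomy. First I would set up notation for the four subsquares $Q_0^{(1)},\dots,Q_0^{(4)}$ of side $\tfrac12$, and observe that for a direction $\theta \in D$ (bounded away from the horizontal and vertical), the projection $\Pi_\ell(Q_0(\eps))$ is an interval on $\ell$, and $\Pi_\ell(Q_0(\eps))$ is covered by $\Pi_\ell$ of an appropriate union of subsquares --- the geometry of removing the upper-left and lower-right $\eps$-corners is exactly what guarantees that the projections of two ``diagonal'' subsquares (say lower-left and upper-right, for negative slope) overlap and jointly cover $\Pi_\ell(Q_0(\eps))$, with the overlap uniform in $\theta \in D$ because $D$ is compact and avoids the axis directions. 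This reduces the statement to: with positive probability, both of these subsquares survive and inside each the analogous event holds at the next scale, but with a \emph{slightly enlarged} $\eps'$ (since the removed corners scale), and one needs the recursion to close.

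The key structural step is to find the right recursive event. I would define, for a parameter $\eps$, the event $G(\eps)$ that $\Pi_\ell(E) \supset \Pi_\ell(Q_0(\eps))$ for all $\theta(\ell)\in D$, and look for a first-level event $B$ (some specified subset of the $2^4$ configurations, e.g. ``all four subsquares chosen'', or a cleverer minimal choice) together with a deterministic inclusion
\[
\Pi_\ell(Q_0(\eps)) \subset \bigcup_{i} \Pi_\ell\bigl(\tfrac12 Q_0(\eps') + v_i\bigr)
\]
valid for all $\theta\in D$, where $\eps' = f(\eps)$ for some explicit contraction-type map $f$ with a fixed point $\eps_* \in (0,\tfrac12)$, or more robustly where $f$ maps a whole interval $[\eps_0, \tfrac12)$ into itself. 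Then $\PP(G(\eps)) \ge \PP(B)\cdot \PP(G(\eps'))^{(\text{number of active subsquares})}$, and since $\eps \mapsto \PP(G(\eps))$ is monotone, a standard argument (the infinite product of the survival probabilities converges to a positive number precisely when $p$ is above the relevant threshold, here $p > \tfrac12$) shows $\PP(G(\eps)) > 0$. The point where $p>\tfrac12$ enters is exactly in making this infinite recursive product positive: at each level one demands survival of a bounded number of prescribed subsquares, and the expected number surviving must exceed $1$ in the appropriate sense, which is where the base-$2$ threshold $p=\tfrac12$ (respectively $p = 1/M$ in base $M$) is sharp.

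Finally, to upgrade ``positive probability'' to ``equals a \emph{fixed} constant $q_\eps$'', I would invoke a tail-triviality / Kolmogorov $0$--$1$ argument adapted to the tree structure of fractal percolation: the event $G(\eps)$ itself is not a tail event, but one shows that $\PP(G(\eps))$ is determined by $p$, $\eps$, $D$ alone --- indeed the statement of Theorem~\ref{RamsSimon} as quoted only asserts the existence of \emph{some} $q_\eps>0$, so it suffices to note that $\PP(G(\eps))$ is a well-defined number depending only on the stated data and is positive by the recursion above; there is nothing further to prove once positivity is established, and I would simply set $q_\eps := \PP(G(\eps))$. The main obstacle I anticipate is the deterministic geometric lemma: verifying that the corner-removal defining $Q(\eps)$ is compatible with the self-similar recursion, i.e.\ pinning down the map $f$ and checking the covering inclusion of projections holds \emph{uniformly} over the arc $D$ with the overlaps not degenerating --- this requires a careful but elementary analysis of how the projections of the nine (or four) relevant dyadic subsquares sit inside $\Pi_\ell(Q_0(\eps))$ as $\theta$ ranges over $D$, and getting the constants to line up so that the recursion on $\eps$ does not escape $(0,\tfrac12)$.
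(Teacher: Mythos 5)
The central gap is in the step you call ``a standard argument.'' The recursive inequality
\[
\PP(G(\eps)) \ge \PP(B)\cdot\PP(G(\eps'))^{k}
\]
(with $\eps'=f(\eps)$ and $k$ the number of prescribed subsquares) cannot by itself yield $\PP(G(\eps))>0$: the inequality holds trivially with both sides equal to zero, and iterating it at a fixed point of $f$ only gives $q\ge\PP(B)\,q^k$, which bounds $q$ from above, not below; iterating from scratch gives the useless $\PP(G(\eps))\ge\PP(B)^{1+k+k^2+\cdots}=0$ whenever $\PP(B)<1$. A binary ``survive/fail'' recursion at each scale is not enough; one needs to track a \emph{count} of supporting squares that grows in expectation, so that the conditional failure probability at each new scale decays fast enough for the infinite product of success probabilities to converge to something positive. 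That is exactly where $p>\frac12$ enters, and your outline never actually invokes it in a load-bearing way.

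This growing-count mechanism is what the paper's sketch of Theorem~\ref{RamsSimon} (inside Lemma~\ref{lem:mi2}) supplies. For nested shrinkages $I_2(Q)\subset I_1(Q)$ of a dyadic square $Q$ one sets $V_k^{(1)}(z)=\sharp\{Q\in\CC_k : z\in\Pi_\ell(I_1(Q))\}$ and $V_k^{(2)}(z)$ analogously with $I_2$, and uses $p>\frac12$ to guarantee $\EE\bigl(V_{k+n}^{(2)}(z)\bigr)>2\,V_k^{(1)}(z)$ for a suitable $n$, uniformly in $z$ and in $\theta(\ell)\in D$. A large deviation bound then shows that if $V_{kn}^{(1)}(z_i)>M$ for every $z_i$ in a finite net of cardinality $K$, then $V_{(k+1)n}^{(2)}(z_i)>2M$ holds simultaneously with probability at least $(1-(1-\Omega(1))^M)^K$. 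Taking $M=2^k$ makes $(1-\Omega(1))^M$ decay superexponentially while $K$ grows only exponentially, so the product over $k$ converges to a positive number (FKG handles the dependence across net points). The strict inclusion $I_2\subset I_1$ is what transfers control from the finite net $\{z_i\}$ to \emph{all} $z$, giving the uniform coverage. Your geometric preliminaries (compactness of $D$, uniform overlap of the projected subsquares, corner-removal scaling) are correct and needed, but they must feed into this counting/large-deviation scheme rather than the binary recursion you wrote down.
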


\begin{proof}
Thhis is proved in \cite{RS}. For the convenience of the reader, a proof is also
sketched in the proof of Lemma~\ref{lem:mi2}.
\end{proof}

Given $Q\in\mathcal Q_n$, where $n\ge 3$, let
$\widetilde Q\in\mathcal Q_{n-2}$ be the unique dyadic square which contains
$Q$. We say that a square $Q$ is a \textbf{corner} if the relative position
of $Q$ within $\widetilde{Q}$ is either the upper left corner or
the lower right one.

Let $0<\varepsilon<\frac 13$ and let $n\ge 3$ be an integer. Denote the centre
of a square $Q$ by $z(Q)$. Given an interval $I\subset\Pi_\ell(Q_0)$ of length
$\varepsilon 2^{-n}$, we consider the collections
\[
\mathcal Q_I=\{Q\in\mathcal Q_n :\Pi_\ell(z(Q))\in I \}
\]
and
\[
\mathcal C_I = \mathcal Q_I \cap \mathcal C_n.
\]
The interval $I$ will be fixed for the moment. Write
\[
\mathcal{Q}_I = \{ Q_1,\ldots, Q_M\},
\]
where $\dist(z(Q_i),\ell) < \dist(z(Q_{i+1}),\ell)$ for
$i=1,\ldots,M-1$. Here $\dist(x,A)=\inf\{\vert x-a\vert:a\in A\}$ is
the distance between a point $x$ and a set $A$. Likewise, set
\[
\mathcal{C}_I = \{ C_1,\ldots, C_N\},
\]
where $\dist(z(C_i),\ell) < \dist(z(C_{i+1}),\ell)$ for
$i=1,\ldots, N-1$. Both $C_i$ and $N$ are random variables, while $Q_i$ and
$M$ are deterministic, but depend on the interval $I$.

Let $Z_i$ be the indicator function for the event ``$C_i$ is a
corner'' with the interpretation that $Z_i=0$ if $i>N$. Define
\[
X_m=\sum_{i=1}^m Z_i.
\]
Furthermore, let $\XX_m$ be the algebra generated by $X_1,\ldots, X_m$(or by
$Z_1,\ldots, Z_m$). The following technical lemma will be a crucial tool in the
proofs. It asserts that, whatever the distribution of corners and non-corners
among $C_1,\ldots C_{m-1}$ is, there is a uniformly positive probability that
the next chosen square $C_m$ (if defined) is not a corner.

\begin{lemma}\label{rarecorners}
There exists $\zeta<1$ depending only on $p$ (and not on $n$, $m$ or the
interval $I$) such that
\begin{equation}\label{rarecornersclaim}
\PP(Z_m=1 \mid \XX_{m-1})\le\zeta.
\end{equation}
\end{lemma}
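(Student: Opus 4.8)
The natural route is to bound $\PP(Z_m=1\mid\mathcal F)$ for a $\sigma$-algebra $\mathcal F$ that refines $\XX_{m-1}$ and then to invoke the tower property (conditional Jensen): if $\PP(Z_m=1\mid\mathcal F)\le\zeta$ almost surely then $\PP(Z_m=1\mid\XX_{m-1})=\EE[\PP(Z_m=1\mid\mathcal F)\mid\XX_{m-1}]\le\zeta$. I would take $\mathcal F$ to record a lazy exploration of the percolation along the enumeration $Q_1,Q_2,\dots$: to decide whether $Q_i\in\CC_n$ one reveals the retention indicators of the ancestors of $Q_i$ not yet seen, and $\mathcal F$ is everything revealed at the (random) moment the $(m-1)$-st chosen square $C_{m-1}$ is identified. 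Then $\XX_{m-1}\subseteq\mathcal F$, and conditionally on $\mathcal F$, writing $C_{m-1}=Q_j$, the square $C_m$ is the first of $Q_{j+1},Q_{j+2},\dots$ whose (continuing) exploration succeeds. Two elementary bookkeeping facts drive everything: (a) for each $i>j$, the conditional chance that $Q_i$ is chosen, given $\mathcal F$ and that $Q_{j+1},\dots,Q_{i-1}$ are not chosen, is at most $p$, because the generation-$n$ retention flag of $Q_i$ is a fresh $\mathrm{Bernoulli}(p)$ factor; and (b) it equals $p$ exactly once the generation-$(n-1)$ parent of $Q_i$ has already been exposed and found alive --- in particular if $Q_i$ shares that parent with a square already known to be chosen.

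From (a) one gets $\PP(C_m=Q_{j+l}\mid\mathcal F)\le p(1-p)^{l-1}$, hence
\[
\PP(Z_m=1\mid\mathcal F)\le\sum_{l\,:\,Q_{j+l}\text{ is a corner}}p(1-p)^{l-1}.
\]
To make this strictly less than $1$ one needs the corners not to dominate the enumeration near $Q_j$, together with a matching lower bound on the probability of reaching a non-corner. The geometric input is that, since $\varepsilon<\tfrac13$, the squares $Q_1,\dots,Q_M$ form a monotone staircase whose steps lie in a small fixed set determined by the slope; inside any generation-$(n-2)$ square the strip traces a monotone sub-path meeting at most one of its two corners, and the predecessor of a corner is never a corner. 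The plan is to combine this with (b): one exhibits, within a bounded number of steps after $C_{m-1}$, a non-corner square $Q_{j+l}$ whose generation-$(n-1)$ parent is already exposed and alive, so that $\PP(Z_m=0\mid\mathcal F)\ge\PP\bigl(Q_{j+1},\dots,Q_{j+l-1}\text{ not chosen},\ Q_{j+l}\text{ chosen}\bigr)\ge p(1-p)^{L-1}$ with $L$ a universal bound on $l$, whence $\zeta=1-p(1-p)^{L-1}$ works.

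The main obstacle is precisely the interaction between this geometry and the percolation dependence. The comfortable statement ``fix $\CC_{n-1}$; then no two consecutive \emph{alive} squares of $\QQ_I$ are both corners'' is \emph{false}: the non-corner squares separating two corners can all belong to deleted generation-$(n-1)$ sub-squares, and for lines of slope close to $\pm1$ the staircase moves diagonally, a corner shares its generation-$(n-1)$ parent with neither its predecessor nor its successor, and corners can occur as often as one in four steps of the enumeration. Controlling this case is where the real work lies: one must argue that, although an unlucky deletion pattern can make the potentially-chosen subsequence corner-heavy, such patterns are atypical once one conditions only on the coarse data $\XX_{m-1}$, so that the desired lower bound on $\PP(Z_m=0\mid\mathcal F)$ --- and hence the uniform bound $\zeta<1$ depending only on $p$ --- survives the averaging. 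I expect this to require a careful case analysis of the staircase geometry (flat, steep, and near-diagonal lines treated separately) together with the lazy-exploration bookkeeping above.
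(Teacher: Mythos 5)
Your skeleton is the right one --- replace $\XX_{m-1}$ by a richer, more tractable $\sigma$-algebra, use a geometric ``corners can't be too dense'' fact, and a probabilistic ``fresh Bernoulli factor'' comparison between successive squares --- but there is a genuine gap, and it is precisely the one you flag at the end of the proposal without resolving.

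First, a minor point: your geometric claim that ``the predecessor of a corner is never a corner'' is stronger than, and not obviously implied by, what the paper actually proves. The paper's Claim~1 is that at least one of any three consecutive squares of $\QQ_I$ is not a corner; its proof only rules out two corners \emph{of the same type} being close (since that forces three intermediate squares between them), and does allow two consecutive corners of opposite types.

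Second, and this is the real issue: you correctly observe that one cannot simply reach a non-corner within a universally bounded number $L$ of steps whose $(n-1)$-parent is already known alive --- the non-corners separating two corners may all lie in deleted $(n-1)$- or $(n-2)$-blocks, so no such $L$ exists. You then say that controlling this case ``is where the real work lies'' and leave it at that. This is exactly the missing ingredient. The paper's resolution is a specific device you don't supply: after $C_{m-1}$, it inductively builds a chain of dyadic squares $R_1,\dots,R_L$ (each $R_i$ the largest dyadic square containing some remaining $Q_j$ but not $C_{m-1}$) that partition the remainder of $\QQ_I$; crucially, each $R_i$ sits inside a dyadic ancestor of $C_{m-1}$, which is already known to be retained given $\YY_{m-1}$, so the $R_i$'s become \emph{independent} $\mathrm{Bernoulli}(p)$ blocks conditionally on $\YY_{m-1}$. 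The proof then splits: if all remaining squares are corners, Claim~1 forces $L\le2$, and with probability at least $(1-p)^2$ none of the $R_i$ is retained so $Z_m=0$; otherwise one of the $R_i$'s contains a non-corner, and the paper invokes a ``the first chosen square in an independent family is a non-corner with $\Omega(1)$ probability'' bootstrap (its Claim~3), applied once at the coarse scale of the $R_i$'s and once at the finer scale inside the $R_i$ that contains $C_m$. Without this dyadic renormalisation, your concern about the corner-heavy alive subsequence stands unanswered, and the claimed $\zeta=1-p(1-p)^{L-1}$ has no valid $L$.

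A further caution on the conditioning choice: the paper deliberately conditions on $\YY_{m-1}$, generated only by $C_1,\dots,C_{m-1}$ and the event $\{m\le N\}$, which is enough to determine the $R_i$'s but reveals nothing about their retention. Your proposed $\mathcal F$ records the entire lazy exploration, which is strictly finer and may already expose that the $(n-1)$-parent of the next candidate square is dead; on such atoms the conditional probability of $Z_m=1$ can spike above any fixed $\zeta$, so the almost-sure bound $\PP(Z_m=1\mid\mathcal F)\le\zeta$ --- which is what your tower-property step requires --- is in doubt. Choosing the refinement is not free; it has to be coarse enough to keep the renormalised blocks' coins fresh.
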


We start by establishing three claims that will be useful in the
proof of the lemma.

\smallskip

\noindent\textbf{Claim 1}. For any $i\in \{1,\ldots,M-2\}$, at least one of
the successive squares $Q_i, Q_{i+1}, Q_{i+2}\in\mathcal Q_I$ is not a corner.

\smallskip

\begin{proof}[Proof of Claim 1]
\begin{figure}
 \centering
  \includegraphics[width=0.85\textwidth]{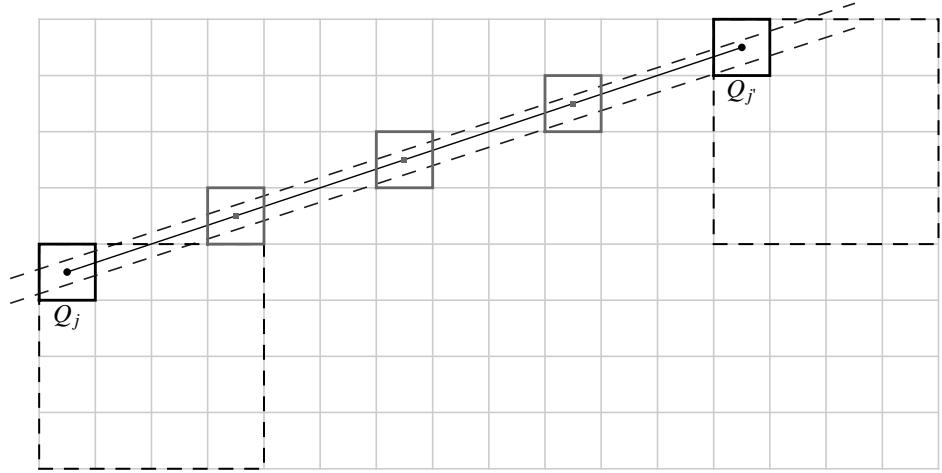}
\caption{The proof of Claim 1: the solid segment joining the centres of $Q_j$
and $Q_{j'}$ is $J$, and the parallel dashed lines represent the boundary of
the stripe $S$. A square is in $\mathcal{Q}_I$ if its centre lies on this
stripe. If $Q_j, Q_{j'}$ are in $\mathcal{Q}_I$ and are both corners of the
same type, then we can find three other squares between them with centres in
$J$, which are therefore also in $\mathcal{Q}_I$.} \label{fig-claim1}
\end{figure}
Suppose that $Q_i, Q_{i+1}, Q_{i+2}$ are all corners. Then there are
$j<j'\in\{i,i+1,i+2\}$ such that $Q_j$ and $Q_{j'}$ are corners of the same
type, i.e. both of them are either upper left or lower right corners. By
definition of $\mathcal{Q}_I$, $z(Q_j)$ and $z(Q_{j'})$ both lie in the stripe
$S$ of lines through $I$ orthogonal to $\ell$; see Figure~\ref{fig-claim1}.
Let $J$ denote the segment that joins $z(Q_j)$ and $z(Q_{j'})$, and denote its
length by $|J|$. By elementary algebra, the points on $J$ at distance
$\frac14|J|$, $\frac12|J|$ and $\frac34|J|$ from $z(Q_j)$ are all centres of
squares in $\mathcal{Q}_n$. Since $J$ is contained in $S$, this implies that
these three squares are in fact in $\mathcal{Q}_I$. Hence $j'-j\ge 4$, which
is a contradiction since we had assumed that $j'-j\in\{1,2\}$.
\end{proof}

\noindent\textbf{Claim 2}. Let $Q$, $\widehat Q\in\mathcal Q_I$ be
successive squares with $\dist(z(Q),\ell)<\dist(z(\widehat Q),\ell)$.
Then
\[
\PP(\widehat Q=C_1)\ge (1-p)\PP(Q=C_1).
\]

\begin{proof}[Proof of claim 2]
Let $R$ be the smallest dyadic square containing both $Q$ and $\widehat Q$, and
let $R_Q$ and $R_{\widehat Q}$ be the largest dyadic proper
subsquares of $R$ containing $Q$ and $\widehat Q$, respectively. Then
$R_Q\ne R_{\widehat Q}$. Denote by $A$ the event
``$R$ is chosen and there are no chosen squares in $\mathcal Q_I$ which
are closer to $\ell$ than those inside $R$''. As ``$Q=C_1$'' and
``$\widehat Q=C_1$'' are subevents of $A$, it is enough to prove
that
\[
\PP(\widehat Q=C_1\mid A)\ge(1-p)\PP(Q=C_1\mid A).
\]
Since $Q=C_1$ in particular implies that $Q$ is chosen, we have
\[
\PP(Q=C_1\mid A)\le\PP(Q\in\mathcal C_I\mid A)
   =\PP(\widehat Q\in\mathcal C_I\mid A).
\]
Conditioned on $A$,  the event ``$\widehat Q\in\mathcal C_I$ and
$R_Q$ is not chosen'' is a subevent of ``$\widehat Q=C_1$'', and moreover, the
events ``$\widehat Q\in\mathcal C_I$'' and ``$R_Q$ is not chosen'' are
independent conditioned on $R$ being chosen. This implies
\begin{align*}
\PP(\widehat Q=C_1\mid A)&\ge\PP(\widehat Q\in\mathcal C_I\text{ and }
R_Q \text{ is not chosen}\mid A)\\
&=\PP(\widehat Q\in\mathcal C_I\mid A)\PP(R_Q \text{ is not chosen}\mid A)\\
&\ge(1-p)\PP(Q=C_1\mid A).
\end{align*}
\end{proof}

\noindent \textbf{Claim 3}. Suppose that at least one square in $\QQ_n$ is not
a corner. Then
\begin{equation}\label{middlestep}
\PP(Z_1=0\mid \mathcal C_I\ne\emptyset) = \Omega(1).
\end{equation}

\begin{proof}[Proof of claim 3]
Denote the collection of corners by $\Cor$. We may write
$\Cor=\Cor_1\cup\Cor_2\cup\Cor_3$ where, for $i\le M-2$, the square
$Q_i\in\Cor_1$ if $Q_{i+1}\notin\Cor$ and $Q_i\in\Cor_2$ provided that
$Q_{i+1}\in\Cor$,
and $\Cor_3=\Cor\cap\{Q_{M-1},Q_M\}$.

According to Claim 1, for $j=1,2$
we may attach to any square $Q_i\in\Cor_j$ the square $Q_{i+j}\notin\Cor$.
Thus for any $Q_i\in\Cor_j$ ($j=1,2$) the events ``$Q_i=C_1$'' and
``$Q_{i+j}=C_1$'' are subevents of ``$\CC_I\ne\emptyset$ and
$C_1\notin \Cor_3$''. Write $A$ for the latter event. By Claim 2 we obtain that
\[
\PP(Q_{i+j}=C_1\mid A)\ge(1-p)^j
\PP(Q_i=C_1\mid A).
\]
Hence, using that $C_1\notin \Cor_3$,
\begin{align*}
1=&\left(\sum_{Q\in\Cor_1}+\sum_{Q\in\Cor_2}+\sum_{Q\notin\Cor}\right)
    \PP(Q=C_1\mid A)\\
  \le &\left(\frac 1{(1-p)^2}+\frac 1{1-p}+1\right)\sum_{Q\notin\Cor}
   \PP(Q=C_1\mid A),
\end{align*}
implying that $\PP(Z_1=0\mid A) = \Omega(1)$.

Since every $Q\in\mathcal Q_I$ has the same probability of being chosen, we
have
$\PP(Q_1=C_1)\ge\PP(Q_i=C_1)$ for all $i=2,\dots,M$, giving
$\PP(C_I\ne\emptyset)\le 3\PP(A)$. Hence
\begin{align*}
\PP(Z_1=0\mid C_I\ne\emptyset) &\ge\PP(Z_1=0 \textrm{ and } C_1\notin \Cor_3
  \mid  C_I\ne\emptyset)\\
 &\ge\frac13\PP(Z_1=0\mid A) = \Omega(1).
\end{align*}
This gives \eqref{middlestep}.
\end{proof}

Now we are ready to prove Lemma~\ref{rarecorners}.

\begin{proof}[Proof of Lemma~\ref{rarecorners}]
Let $\YY_m$ be the algebra generated by the random variables
$C_1,\ldots, C_{m\wedge N}$ and the event ``$m\le N$''. Note that this is a
refinement of $\XX_m$. Hence it is enough to prove that
\begin{equation}\label{rarecornersclaim2}
\PP(Z_m=1 \mid \YY_{m-1}) = 1 - \Omega(1).
\end{equation}

We assume $m\le N$; otherwise there is nothing to prove. Let $i_0$ be the index
for which $C_{m-1}=Q_{i_0}$. Note that $i_0<M$, since otherwise $m-1=N$.

\begin{figure}
  \centering
    \includegraphics[width=0.7\textwidth]{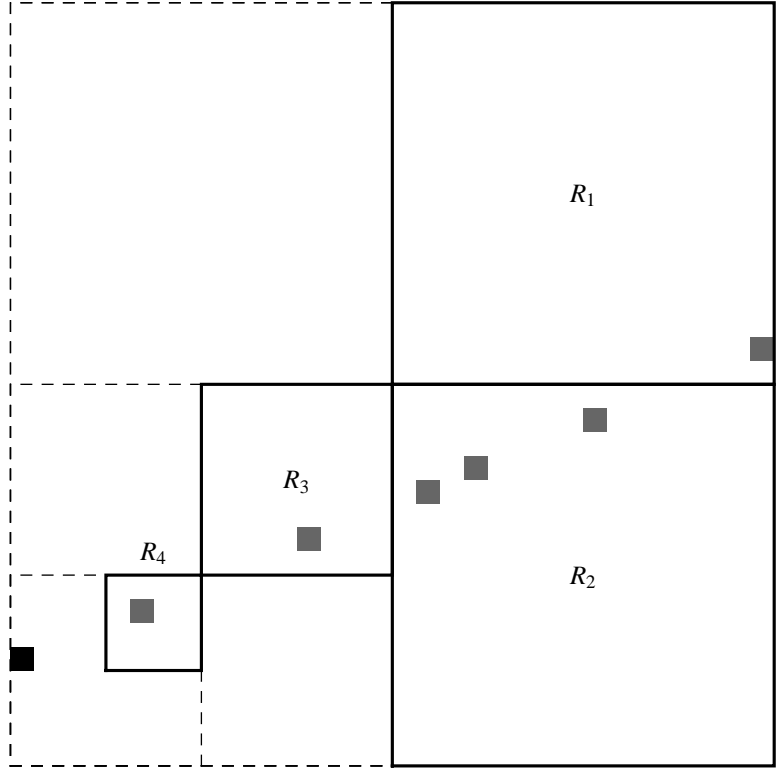}
\caption{Construction of the rectangles $R_i$: the black square represents
$C_{m-1}$, and the gray squares are the remaining squares in $\mathcal{Q}_I$
after $C_{m-1}$. } \label{fig-construction}
  \end{figure}

We select a finite collection $\{ R_i\}$ of dyadic squares inductively
in the following manner:  Let $R_1$ be the largest dyadic square which
contains $Q_M$ but does not contain $C_{m-1}$. Assuming that dyadic squares
$R_1,\dots, R_i$ have been selected, pick the largest index $i_0<j<M$ such that
$Q_j$ is not contained in $R_1\cup\dots\cup R_i$.
Let $R_{i+1}$ be the largest dyadic square which contains $Q_j$ but does
not contain $C_{m-1}$. The process stops when we have a collection
$\{R_1,\dots,R_L\}$ such that for all $i_0<j\le M$ the square $Q_j$ belongs
to $R_i$ for some unique $i=1,\dots,L$. See Figure~\ref{fig-construction}.

By construction, $C_{m-1}$ belongs to the dyadic square containing $R_i$ and
having side length twice of that of $R_i$ (see Figure~\ref{fig-construction};
these squares are represented by dotted lines). Therefore, the side length of
$R_{i+1}$ is at most that of $R_i$ for all $i=1,\dots,L-1$, and each $R_i$ has
probability $p$ of being chosen, independently of each other.

Assume first that all the squares after $C_{m-1}$ in $\mathcal Q_I$  are
corners. Then, by Claim 1, there are at most two of them, which gives $L\le 2$.
Thus the probability that neither of the two corners in $\mathcal Q_I$ after
$\mathcal C_{m-1}$ is chosen is
at least $(1-p)^2$, giving
\[
\PP(Z_m=1\mid \YY_{m-1})\le 1-(1-p)^2.
\]

Now assume that there is $R_i$ containing at least one square in $\QQ_I$ which
is not in $\Cor$.  To see that \eqref{rarecornersclaim2} holds, divide the
collection $\{R_1,\dots,R_L\}$ into two parts $P_{\textrm{bad}}$ and
$P_{\textrm{good}}$ as follows: we say that $R_i\in P_{\textrm{bad}}$ if all
squares that belong to $\mathcal Q_I$ and are contained in $R_i$ are corners,
and $R_i\in P_{\textrm{good}}$ if $R_i$ contains
a square that belongs to $\mathcal Q_I$ and is not a corner.

Since each $R_i$ contains some square in $\mathcal Q_I$, we may use Claim 1 as
in the proof of Claim 3 to find that we may attach to any
$R_i\in P_{\textrm{bad}}$, with $i\le L-2$, a square
$R_{i+j}\in P_{\textrm{good}}$ where $j=1$ or $2$. The same argument of Claim 3
then gives
\[
\PP(C_m\subset R_i \textrm{ for some } R_i\in P_{\textrm{good}} \mid \YY_{m-1})
  = \Omega(1).
\]
(Recall that we are conditioning on $P_{\textrm{good}}$ being non-empty.) Hence
it remains to prove that
\[
\PP(Z_m=0|C_m\subset R_i \textrm{ for some } R_i\in P_{\textrm{good}}, \YY_{m-1})
   = \Omega(1).
\]
However, by conditioning on the index $i$ for which $C_m\subset R_i$, we are
exactly in the situation of Claim 3 (applied to some $n'<n$ and a different
interval $I'$).

This completes the proof of the lemma.
\end{proof}

As a corollary, we obtain the following large deviation bound for $X_m$:

\begin{lemma}[Azuma-Hoeffding inequality]\label{azuma}
Let $\zeta$ be as in Lemma~\ref{rarecorners} and choose $\eta>0$ such that
$\zeta+\eta<1$. Then
\[
\PP(X_m>(\zeta+\eta)m)<e^{-\frac{\eta^2 m}2}.
\]
\end{lemma}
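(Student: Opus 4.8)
\medskip

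The plan is to deduce this directly from the classical Azuma--Hoeffding martingale concentration inequality, using Lemma~\ref{rarecorners} to control the conditional expectations of the increments $Z_i$. The key observation is that, by Lemma~\ref{rarecorners}, $\EE(Z_i\mid\XX_{i-1})\le\zeta$ for every $i$, and that $\XX_1\subset\XX_2\subset\cdots$ is a filtration (each $\XX_i$ being generated by $Z_1,\dots,Z_i$). Passing to the compensated sum turns $X_m$ into a martingale with controlled drift, to which the standard inequality applies.

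Concretely, I would introduce
\[
Y_m=X_m-\sum_{i=1}^m\EE(Z_i\mid\XX_{i-1}),\qquad Y_0=0.
\]
Since $X_m$ is $\XX_m$-measurable and each term $\EE(Z_i\mid\XX_{i-1})$ is $\XX_{i-1}\subset\XX_m$-measurable, $Y_m$ is $\XX_m$-measurable, and
$\EE(Y_m-Y_{m-1}\mid\XX_{m-1})=\EE(Z_m\mid\XX_{m-1})-\EE(Z_m\mid\XX_{m-1})=0$, so $(Y_m)$ is a martingale with respect to $(\XX_m)$. Its increments are bounded: $|Y_i-Y_{i-1}|=|Z_i-\EE(Z_i\mid\XX_{i-1})|\le 1$ because $Z_i\in\{0,1\}$. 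The Azuma--Hoeffding inequality with all constants equal to $1$ then gives $\PP(Y_m\ge s)\le\exp(-s^2/(2m))$ for every $s>0$. Finally, since $\sum_{i=1}^m\EE(Z_i\mid\XX_{i-1})\le\zeta m$ almost surely, on the event $\{X_m>(\zeta+\eta)m\}$ we have $Y_m>(\zeta+\eta)m-\zeta m=\eta m$; hence, taking $s=\eta m$,
\[
\PP(X_m>(\zeta+\eta)m)\le\PP(Y_m\ge\eta m)\le\exp\Big(-\frac{(\eta m)^2}{2m}\Big)=\exp\Big(-\frac{\eta^2 m}{2}\Big).
\]

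There is essentially no obstacle here: the entire content is choosing the right martingale, and everything else is a black-box application of a standard inequality; the only points meriting a line of care are the filtration inclusion $\XX_{i-1}\subset\XX_i$ (needed for $(Y_m)$ to be adapted and a martingale), the bound $|Y_i-Y_{i-1}|\le 1$ on the increments, and the elementary event inclusion $\{X_m>(\zeta+\eta)m\}\subseteq\{Y_m\ge\eta m\}$ in the last step. If a self-contained argument is preferred, one can avoid citing Azuma--Hoeffding altogether and use the exponential Markov inequality: for $\lambda>0$ one has $\EE(e^{\lambda Z_i}\mid\XX_{i-1})=1+(e^\lambda-1)\EE(Z_i\mid\XX_{i-1})\le 1+(e^\lambda-1)\zeta\le\exp((e^\lambda-1)\zeta)$, which iterates to $\EE(e^{\lambda X_m})\le\exp((e^\lambda-1)\zeta m)$ and, after optimising the resulting bound $\PP(X_m>(\zeta+\eta)m)\le\exp(-\lambda(\zeta+\eta)m+(e^\lambda-1)\zeta m)$ over $\lambda$, yields the (in fact slightly stronger) estimate.
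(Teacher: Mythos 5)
Your argument is correct. It is essentially the same strategy as the paper's — control the drift via Lemma~\ref{rarecorners} and feed a centered sum into the Azuma--Hoeffding inequality — but you center differently, and the difference is worth noting. The paper sets $\widetilde X_m=\sum_{i=1}^m(Z_i-\zeta)$; since $\EE(Z_i\mid\XX_{i-1})\le\zeta$, this is only a supermartingale, and the paper has to add the remark that the Azuma--Hoeffding argument in \cite{ASE} (stated there for martingales) carries over to supermartingales. You instead subtract the Doob compensator, $Y_m=X_m-\sum_{i=1}^m\EE(Z_i\mid\XX_{i-1})$, obtaining a genuine martingale with increments in $[-1,1]$, so the standard martingale version applies with no caveat; the price is one extra (trivial) step, namely the almost sure inequality $\sum_{i=1}^m\EE(Z_i\mid\XX_{i-1})\le\zeta m$ to translate the deviation of $Y_m$ into the deviation of $X_m$. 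Your version is arguably cleaner for a reader who only has the martingale statement at hand. The alternative Chernoff-style computation you sketch at the end (bounding $\EE(e^{\lambda Z_i}\mid\XX_{i-1})\le\exp((e^\lambda-1)\zeta)$, iterating, and applying Markov) is also correct, is self-contained, and in fact gives a sharper Bennett-type bound than the stated $e^{-\eta^2 m/2}$; it would be a perfectly acceptable replacement for the citation. One small cosmetic point: the lemma is stated with a strict inequality, while both your argument and the paper's give $\le$; this is harmless, as the subsequent applications only use the $\le$ form.
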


\begin{proof} Define $Y_i=Z_i-\zeta$ and $\widetilde X_m=\sum_{i=1}^m Y_i$.
Then $\widetilde X_m$ is a (discrete time) supermartingale, that is,
$\EE(\widetilde X_m\mid\widetilde X_1,\dots,\widetilde X_{m-1})
   \le\widetilde X_{m-1}$. Applying the
Azuma-Hoeffding inequality \cite[Theorem 7.2.1]{ASE} to $\widetilde X_m$
with $\lambda=\eta\sqrt m$ gives the claim. Note that \cite[Theorem 7.2.1]{ASE}
is verified only for martingales but the same proof works for
supermartingales as well.
\end{proof}

\section{Visible parts from lines}\label{linessection}

This section is dedicated to the proofs of Theorems~\ref{alllines} and
\ref{maintheorem}, and Corollary~\ref{aboveperco}. We start with
Theorem~\ref{maintheorem} for clarity of exposition, as the proof is somewhat
easier than that of Theorem~\ref{alllines}.

\begin{proof}[Proof of Theorem~\ref{maintheorem}] As remarked in the previous
section, it is enough to prove the theorem for a fixed line $\ell = -tx-a$
with $t,a>0$. By Theorem~\ref{RamsSimon},
$\HH^1(V_\ell(E))>0$ almost surely
conditioned on non-extinction, and therefore we only need to prove that
\[
\HH^1(V_\ell(E))<\infty
\]
almost surely.

Denote by $\theta$ the angle between $\ell^\perp$ and the positive
$x$-axis, and let $\varepsilon<\tfrac12\sin\theta\cos\theta$. (The factor
$\sin\theta$ is needed when $\theta$ is close to 0 and the factor $\cos\theta$
is essential when $\theta$ is close to $\frac\pi 2$.)
Given a positive integer $n$, let $N(n)$ be the smallest integer such that
$N(n)\varepsilon2^{-n}\ge\sqrt 2$. Then $N(n)\le 2\eps^{-1} 2^n$.
Divide $\Pi_\ell(Q_0)$ into disjoint line segments of length
$\varepsilon2^{-n}$ (except for the last one which may be smaller), and denote
them by $I_{n,1},\ldots, I_{n,N(n)}$.
For all $1\le j\le N(n)$, set $\mathcal Q_{n,j}=\mathcal Q_{I_{n,j}}$.

We say that $Q\in\mathcal Q_{n,j}$ \textbf{induces a block} if $Q$ is not a
corner and the unique square $\widetilde Q\in\mathcal Q_{n-2}$ which contains
$Q$ is a \textbf{block}, meaning that
\[
\Pi_\ell(\widetilde Q(\frac18))\subset\Pi_\ell(\widetilde Q\cap E).
\]
If $Q$ is not a corner and $\widetilde Q$ is not a block, we say that
$\widetilde Q$ is a \textbf{window} and $Q$ \textbf{induces a window}.
By Theorem~\ref{RamsSimon} and independence, every
chosen square $Q\in\mathcal Q_{n,j}$ which is not a corner has the same
probability $q>0$ of inducing a block. Moreover,
if $\widetilde Q_1$ and $\widetilde Q_2$ are chosen and different, then the
events ``$\widetilde Q_1$ is a block'' and ``$\widetilde Q_2$ is a block'' are
independent.

\begin{figure}
  \centering
   \includegraphics[width=0.7\textwidth]{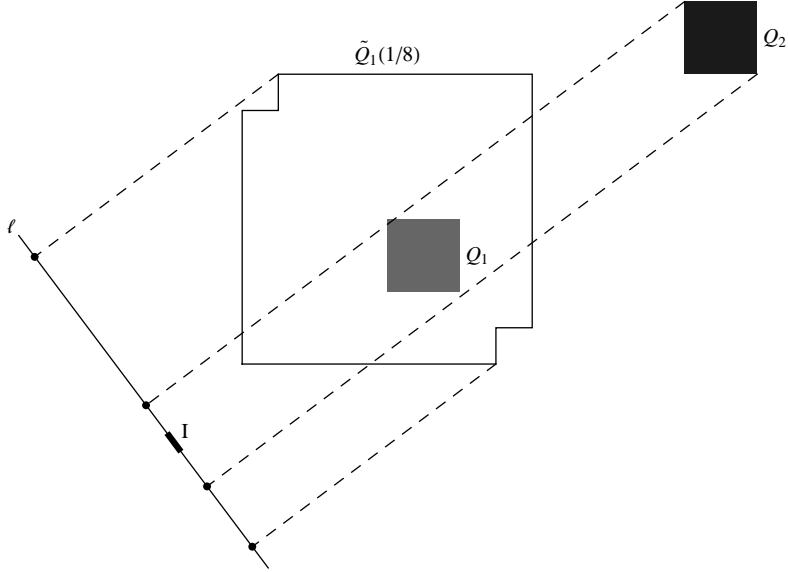}
\caption{In this figure $Q_1,Q_2\in\mathcal{C}_I$. The number $\varepsilon$
(i.e. the length of $I$ relative to the side length of $Q_1$ and $Q_2$) is
chosen so that the projection of $Q_2$ onto $\ell$ is contained in the
projection of $\widetilde{Q}_1(\frac 18)$ whenever $Q_1$ is not a corner. When
$Q_1$ induces a block, the visible part of $E$ from the interval $I$ cannot
intersect $Q_2$.} \label{fig-block}
\end{figure}

The geometric significance of blocks is depicted in Figure~\ref{fig-block}: Let
$Q_1$, $Q_2\in\mathcal Q_{n,j}$ be squares such that $Q_1$ is closer to $\ell$
than $Q_2$ and
$\widetilde Q_1\ne\widetilde Q_2$. Suppose that $Q_1$ induces a block. Then
by the choice of $\varepsilon$ we have
\[
\Pi_\ell(Q_2)\subset \Pi_\ell(\widetilde Q_1(\frac18))
\subset\Pi_\ell(\widetilde Q_1\cap E),
\]
giving $Q_2\cap V_\ell(E)=\emptyset$. In particular, if
$Q_B\in\mathcal Q_{n,j}$ is the first square in $\QQ_{n,j}$ that induces a
block, then we can cover the visible part of $E$ from $I_{n,j}$ by all chosen
squares in $Q_{n,j}$ up to $Q_B$, plus the squares $Q$ such that
$\widetilde{Q} = \widetilde{Q}_B$. Thus, estimates on the position of the first
square in $\QQ_{n,j}$ that induces a block will yield estimates on the size of
$V_\ell(E)$.

Letting $\zeta$ and $\eta$ be as in Lemma~\ref{azuma}, define
$\gamma=1-(\zeta+\eta)$ and $\gamma'=e^{-\eta^2/2}$. Denote by $Y_{n,j}$
the number of chosen squares in $\mathcal Q_{n,j}$ which are
needed to cover the stripe of $V_\ell(E)$ above $I_{n,j}$, and assume that
$Y_{n,j}=i+4$. Now there are two possibilities: the number of corners among
the first $i$ chosen squares in $\mathcal Q_{n,j}$ is either at least
$(\zeta+\eta)i$ or less than $(\zeta+\eta)i$.

By Lemma~\ref{azuma}, the first event has probability at most $\gamma'^i$ of
occurring. In the latter case
the number of squares that induce a window among the first $i$ squares is at
least $\gamma i$. Observe also that for given $Q\in\mathcal Q_{n,j}$ there are
at most four $Q'\in\mathcal Q_{n,j}$ (including $Q$) such that
$\widetilde Q'=\widetilde Q$. Hence the probability of the second event is at
most $(1-q)^{\frac{\gamma i}4}$. We deduce that
\[
\PP(Y_{n,j}=i+4)\le (1-q)^{\frac{\gamma i}4}+\gamma'^i\le 2\tilde\gamma^i
\]
where $\tilde\gamma=\max\{(1-q)^{\frac{\gamma}4},\gamma'\}<1$. This in turn
implies that $\EE(Y_{n,j})= O(1)$. Writing $S_n=\sum_{j=1}^{N(n)} Y_{n,j}$, we
therefore have
\begin{equation}\label{expectednumber}
\EE(S_n) = O(\eps^{-1} 2^n).
\end{equation}
By definition, we can cover $V_\ell(E)$ by $S_n$ squares of side length
$2^{-n}$, whence
\[
\HH^1(V_\ell(E)) \le \liminf_{n\rightarrow\infty}\sqrt2\cdot 2^{-n} S_n.
\]
By Fatou's lemma, Lemma~\ref{measurability} below and inequality
\eqref{expectednumber}, we have that almost surely
\[
\EE(\HH^1(V_\ell(E)))\le\liminf_{n\rightarrow\infty}\sqrt2 \cdot 2^{-n}\EE(S_n)
   =O(\eps^{-1})<\infty.
\]
This shows that $\HH^1(V_\ell(E))<\infty$ almost surely, as desired.
\end{proof}

\begin{proof}[Proof of Corollary~\ref{aboveperco}]
The claim follows from Theorem~\ref{maintheorem} combined with
Fubini's theorem.
For the purpose of applying Fubini's theorem we need to prove that the set
$\{(E,\ell) : 0<\mathcal H^1(V_\ell(E))<\infty\}$ is measurable.
This is an immediate consequence of Lemma~\ref{measurability} in which
we prove that it contains a Borel set with full measure.
\end{proof}

Let $S_n=S_n(E,\ell)$ be as in the proof of Theorem~\ref{maintheorem}, that is,
$S_n(E,\ell)$ is the number of the dyadic squares of side length $2^{-n}$
that cover $V_\ell(E)$. In the proof of Theorem~\ref{maintheorem} we estimate
$S_n(E,\ell)$ from above by a function which is defined by counting blocks,
windows and corners. Call this function $\widetilde S_n(E,\ell)$.
In the space of constructions we use the
natural topology induced by the open cylinder sets
$[F]=\{E:E_m=\cup_{Q\in F}Q\}$ where
$F\subset\mathcal Q_m$ and $E_m$ is the union of all
chosen squares of side length $2^{-m}$ in the construction of $E$, that is,
$E=\cap_{m=1}^\infty E_m$.

\begin{lemma}\label{measurability}
The function $(E,\ell)\mapsto\widetilde S_n(E,\ell)$ is a Borel function for
all positive integers $n$.
\end{lemma}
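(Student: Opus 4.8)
The plan is to exhibit $\widetilde S_n$ as a pointwise limit of functions, each of which depends on $E$ through only finitely many levels of the construction and on the line $\ell$ in a transparently Borel way. Recall from the proof of Theorem~\ref{maintheorem} that $\widetilde S_n(E,\ell)=\sum_{j=1}^{N(n)}Y_{n,j}(E,\ell)$, where $Y_{n,j}$ is computed from the list of chosen squares of $\mathcal Q_{n,j}$ arranged in order of increasing distance to $\ell$: one counts those up to and including the first square inducing a block and adjoins the (at most four) squares sharing its parent (and counts all chosen squares of $\mathcal Q_{n,j}$ if there is no block-inducing square). Since $N(n)$ may be taken to be a Borel function of $\ell$ and the sets $\{\ell:N(n,\ell)=N_0\}$ partition the space of lines into countably many Borel pieces, it suffices to prove that each $(E,\ell)\mapsto Y_{n,j}(E,\ell)$ is Borel.

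The central device is a finite-depth approximation of the block condition. For an integer $m\ge n$, I would call $\widetilde Q\in\mathcal Q_{n-2}$ an \emph{$m$-block} if $\Pi_\ell(\widetilde Q(\frac18))\subset\Pi_\ell(\widetilde Q\cap E_m)$, and let $Y_{n,j}^{(m)}$ be defined exactly as $Y_{n,j}$ with ``block'' replaced throughout by ``$m$-block''. The first step is to see that $Y_{n,j}^{(m)}$ is Borel. Every ingredient of $Y_{n,j}^{(m)}$ — which of the finitely many squares of $\mathcal Q_{n,j}$ are chosen (read off from $E_n$, hence from $E_m$) and in what order relative to $\dist(z(\cdot),\ell)$, which of them are corners (a purely combinatorial property of the square), and which of their parents are $m$-blocks (read off from $E_m$ once $\ell$ is fixed) — depends on $E$ only through the finite datum $E_m$, and on $\ell$ only through finitely many conditions, namely comparisons between $\dist(z(Q),\ell)$ and $\dist(z(Q'),\ell)$ and containments $\Pi_\ell(\widetilde Q(\frac18))\subset\Pi_\ell(U)$ with $U$ a fixed finite union of dyadic squares. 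The comparisons cut out open or closed subsets of the space of lines by continuity of $\dist$. For the containments, I would use that $\widetilde Q(\frac18)$ is connected, so $\Pi_\ell(\widetilde Q(\frac18))$ is a nondegenerate interval whose endpoints vary continuously with $\ell$, as do the endpoints of the finitely many intervals comprising $\Pi_\ell(U)$, and an elementary compactness argument then shows that $\{\ell:[a(\ell),b(\ell)]\subset\bigcup_i[c_i(\ell),d_i(\ell)]\}$ is closed whenever $a,b,c_i,d_i$ are continuous. Consequently $Y_{n,j}^{(m)}$ is constant on each product of a cylinder $[F]$, $F\subset\mathcal Q_m$, with an atom of the finite Boolean algebra generated by these subsets of the space of lines; there are countably many such products, so $Y_{n,j}^{(m)}$, and hence $\widetilde S_n^{(m)}:=\sum_j Y_{n,j}^{(m)}$, is Borel.

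The second step is to pass to the limit $m\to\infty$. Since $E_{m+1}\subset E_m$, the compact sets $\widetilde Q\cap E_m$ decrease to $\widetilde Q\cap E$, so by continuity of $\Pi_\ell$ one has $\bigcap_m\Pi_\ell(\widetilde Q\cap E_m)=\Pi_\ell\bigl(\bigcap_m(\widetilde Q\cap E_m)\bigr)=\Pi_\ell(\widetilde Q\cap E)$. Hence the indicator of ``$\widetilde Q$ is an $m$-block'' is nonincreasing in $m$ and equals the indicator of ``$\widetilde Q$ is a block'' for all large $m$; as $\mathcal Q_{n,j}$ is finite, $Y_{n,j}^{(m)}=Y_{n,j}$ for all $m$ large enough (depending on $E$ and $\ell$). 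Therefore $\widetilde S_n=\lim_{m\to\infty}\widetilde S_n^{(m)}$ pointwise, and a pointwise limit of Borel functions is Borel, which proves the lemma.

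The one genuinely delicate point is the block condition itself: in contrast with ``chosen'', ``corner'', and the ordering by distance, it refers to the entire construction of $E$ inside $\widetilde Q$ and is not continuous in $E$ (a square may be chosen at every level and still meet $E$ in the empty set), so it cannot simply be treated on a single cylinder set. Getting around this via the finite-depth approximants together with the identity $\bigcap_m\Pi_\ell(K_m)=\Pi_\ell(\bigcap_m K_m)$ for decreasing compacta is the crux; once that is set up, the remaining verifications are routine bookkeeping.
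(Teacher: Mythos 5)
Your proposal is correct and follows essentially the same route as the paper: the crux in both is the finite-depth approximation of the block condition (replacing $E$ by $E_m$), justified by the identity $\bigcap_m\Pi_\ell(\widetilde Q\cap E_m)=\Pi_\ell(\widetilde Q\cap E)$ for decreasing compacta, with the $m$-level conditions being Borel because they depend on $E$ only through a cylinder of depth $m$ and on $\ell$ through finitely many closed/open constraints. The paper phrases this as a countable intersection of Borel sets rather than as a pointwise limit of Borel functions, and is slightly more terse about the dependence on the distance ordering, but the underlying argument is identical.
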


\begin{proof}
Since the corners are independent of $E$ and $\ell$ we may consider only blocks
and windows. Let $N$ be a positive integer. The set
$\{(E,\ell):\widetilde S_n(E,\ell)\le N\}$ is a finite union of finite
intersections of sets of the form $\{(E,\ell): Q\text{ is a block}\}$
and $\{(E,\ell): Q\text{ is a window}\}$ where $Q\in\mathcal Q_{n-2}$. Since
the latter set is the complement of the former one it suffices to verify that
the former one is a Borel set.

From the definition of a block we get
\begin{eqnarray*}
\{(E,\ell) : Q \text{ is a block} \}&
=&\{(E,\ell) : \Pi_{\ell}(Q\cap E)\supset \Pi_{\ell}(Q(\frac{1}{8}))\} \\ &
=&\bigcap_{m=1}^{\infty}\{(E,\ell) : \Pi_{\ell}(Q\cap E_m)
 \supset \Pi_{\ell}(Q(\frac{1}{8})) \}
\end{eqnarray*}
where the last equality follows from the fact that if
$y \in \Pi_{\ell}(Q(\frac18))$ and
$\Pi_{\ell}(Q(\frac18))\subset\Pi_{\ell}(Q \cap E_m)$ for all $m$
then the sets $\Pi_{\ell}^{-1}(y) \cap Q \cap E_m$ form a decreasing sequence
of non-empty compact sets, and therefore, there exists
$x \in\Pi_{\ell}^{-1}(y) \cap E\cap Q$ giving $y\in\Pi_\ell(Q\cap E)$.

Given $m$, the set $\mathcal Q_m$ has a finite number of subsets, say
$F_1,\dots,F_M$. Now
\[
\{(E,\ell): \Pi_{\ell}(Q\cap E_m)\supset\Pi_{\ell}(Q(\frac18))\}
=\bigcup_{i=1}^M ([F_i]\times
  \{\ell :\Pi_\ell(Q\cap\bigcup_{Q'\in F_i}Q')\supset \Pi_{\ell}(Q(\frac18))\})
\]
is a Borel set since for fixed $i$ the set
$\{\ell :\Pi_\ell(Q\cap\bigcup_{Q'\in F_i}Q')\supset \Pi_{\ell}(Q(\frac18))\}$
consists of finitely many closed intervals. This finishes the proof.
\end{proof}

In the last part of this section we prove Theorem~\ref{alllines}.

\begin{proof}[Proof of Theorem~\ref{alllines}]
By Theorem~\ref{RamsSimon} (and the results of \cite{FG})
$\dimh V_\ell(E)\ge 1$ for all $\ell$ almost surely. Since
$\dimh A\le\ldimb A\le\udimb A$ for any bounded set $A$, it is enough to
show that, given a sequence $a_n$ with $\frac{a_n}n\rightarrow\infty$, almost
surely the following holds: if $\ell$ is a line not meeting $Q_0$, then
\[
N_n(V_\ell(E)) \le a_n 2^n \quad\textrm{for all large enough } n.
\]
Indeed, by Lemma~\ref{enoughnotmeetingsquare} it is enough to consider lines
which do not meet the unit square, and if the above holds then clearly
$\udimb V_\ell(E)\le 1$ (taking for example $a_n=n^2$).

Let $D$ be a closed interval of directions which does not contain the vertical
or horizontal ones. Recall that the direction of a line $\ell$ is parametrised
by the angle between $\ell^\perp$ and the $x$-axis and is denoted by
$\theta(\ell)$. It is enough to prove the claim for all lines with directions
in $D$ simultaneously, since we can cover all directions by a countable union
of such intervals plus the horizontal and vertical directions.
Observe that $V_\ell(E)=V_{\ell'}(E)$ if $\ell'$ is parallel to $\ell$ and
they both are on the same side of the unit square. By symmetry, $V_\ell(E)$ and
$V_{\ell'}(E)$ still have the same distribution if $\ell'$ and $\ell$ are
parallel but on different sides of the unit square.

Choose $\varepsilon>0$ such that
$\varepsilon<\frac 12\sin\theta\cos\theta$ for all $\theta\in D$. Consider
$n\in\mathbb N$ and a line $\ell$ with $\theta(\ell)\in D$. Let $I$ be a
line segment of length $\varepsilon 2^{-n}$ in $\Pi_\ell(Q_0)$. We say that a
square $Q$ is \textbf{above} $I$ if its centre projects inside $I$ under
$\Pi_\ell$. Such an interval $I$ is \textbf{good} if either there are fewer
than $a_n$ chosen squares above $I$, or if there is a chosen square among the
first $a_n$ chosen squares above $I$ which is not a corner and which induces a
block for \textit{all} $\theta\in D$. Intervals which are not good will be
called \textbf{bad}.

Suppose there are at least $a_n$ chosen squares above $I$. Letting $\zeta,\eta$
be as in Lemma~\ref{azuma} we may, as in the proof of
Theorem~\ref{maintheorem}, consider the cases in which the number of corners
among the first $a_n$ chosen squares is at least $(\zeta+\eta)a_n$ or less than
$(\zeta+\eta)a_n$. Arguing exactly like in the proof of
Theorem~\ref{maintheorem}, but using the full strength of
Theorem~\ref{RamsSimon} which holds simultaneously for all directions in $D$,
we obtain that, for any given interval $I$,
\[
\PP(I \textrm{ is bad}) < e^{-\Omega(a_n)}.
\]

Let $0<\varepsilon'<\varepsilon$. Divide $\Pi_\ell(Q_0)$ into line segments of
length $\varepsilon' 2^{-n}$ as in the proof of Theorem~\ref{maintheorem}.
Let $I_\ell'$ be such a line segment and let $I\supset I_\ell'$ be a line
segment of length $\varepsilon 2^{-n}$ having the same centre as $I_\ell'$.
Denote by $S_I$ the stripe generated by $I$, that is, $S_I=I\times\ell^\perp$
where $\ell$ is the line containing $I$. Choose $\delta>0$ so small that
$S_{I_{\ell_1}'}\cap Q_0\subset S_I\cap Q_0$ for all $\ell_1$ such that
\begin{equation}\label{close}
|\theta(\ell_1)-\theta(\ell)|<\delta 2^{-n},
\end{equation}
where $I_{\ell_1}'$ is the line
segment of length $\varepsilon' 2^{-n}$ in $\Pi_{\ell_1}(Q_0)$ which is
closest to $I_\ell'$. Observe that if $I$ is good then the visible part from
$I_{\ell_1}'$ is covered by the first $a_n$ chosen squares above $I$ for all
$\ell_1$ satisfying \eqref{close} (or by all such chosen squares if there are
fewer than $a_n$ of them).

Since for each $\ell$ we need to consider less than $2\varepsilon'^{-1}2^n$
intervals, the probability that there is at least one interval $I'$ such
that we cannot cover the visible part above $I'$ by at most $a_n$ squares
of side length $2^n$ is less than $2\varepsilon'^{-1}2^ne^{-\Omega(a_n)}$.
By the above observation, if we have this property for a set of lines
$\{\ell_i\}$ such that the set of directions $\{\theta(\ell_i)\}$ is
$(\delta 2^{-n})$-dense, then
it is true for all directions in $D$. Therefore, the
probability that there is some interval $I'\subset\Pi_{\ell'}(Q_0)$ for some
$\ell'$ with $\theta(\ell')\in D$ such that we need more than $a_n$ squares
to cover the visible part from $I'$, is bounded above by
\[
P_n:=4(\delta\varepsilon')^{-1}2^{2n}e^{-\Omega(a_n)}.
\]
By our assumption that $\frac{a_n}n\rightarrow\infty$, the series $\sum_n P_n$
converges. Hence the Borel-Cantelli lemma implies that almost surely for each
$\ell$ with $\theta(\ell)\in D$, the visible part $V_\ell(E)$ satisfies
\[
N_n(V_\ell(E))\le 2\varepsilon'^{-1}2^n a_n\quad\textrm{for all large enough }n.
\]
Replacing $a_n$ by $a'_n = a_n \varepsilon'/2$ we obtain the desired statement.
\end{proof}

\section{Visible parts from points}\label{pointssection}

In this section we consider visible parts from points. The same general ideas
apply, except that we need an analogue of Theorem~\ref{RamsSimon} for radial
projections. This is given by the following proposition. For
$x\in\mathbb R^2\setminus Q_0$, we denote by $\Pi_x$ the radial projection onto
a circle $S(x)$ centred at $x$ and not intersecting $Q_0$.

\begin{proposition}\label{bigarc}
Fix $x^0\in\mathbb R^2\setminus Q_0$ and let
$r_0=\frac 1{10}\min\{1,\dist(x^0,Q_0)\}$.
Then for any $0<\varepsilon<\frac 12$ there exists $q_\varepsilon>0$ such that
\[
\PP\bigl(\Pi_x(E)\supset\Pi_x(\underline{Q}_0(\eps))\text{ for all }x\in
  B(x^0,r_0)\bigr)=q_\varepsilon.
\]
Here $\underline{Q}_0(\varepsilon)$ is the set obtained by removing half-open
squares of side length $\varepsilon$ from each corner of the unit square.
\end{proposition}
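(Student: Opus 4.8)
The plan is to reduce Proposition~\ref{bigarc} to Theorem~\ref{RamsSimon} by a compactness-plus-stability argument, exploiting that radial projections from nearby points onto a fixed far-away circle are, locally, uniformly close to orthogonal projections in a suitable direction. The key geometric observation is that if $x^0$ is at distance at least $10r_0$ from $Q_0$ (by the choice of $r_0$), then for every $x\in B(x^0,r_0)$ and every point $y\in Q_0$, the radial projection direction $\Pi_x$ near $y$ differs from a fixed ``generic'' direction only by a bounded amount, and crucially the line through $y$ perpendicular to $x-y$ never becomes horizontal or vertical uniformly over $y\in Q_0$ and $x\in B(x^0,r_0)$ — provided we first rotate coordinates so that the segment from $x^0$ to the centre of $Q_0$ is, say, in a diagonal direction. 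After such a harmless rotation (fractal percolation is not rotation invariant, so one must instead re-examine the proof of Theorem~\ref{RamsSimon}; see below) we may assume $x^0$ lies in a cone of ``transversal'' positions.

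The heart of the matter is to re-run the proof of Theorem~\ref{RamsSimon} from~\cite{RS} with the family of orthogonal projections $\{\Pi_\ell:\theta(\ell)\in D\}$ replaced by the family of radial projections $\{\Pi_x:x\in B(x^0,r_0)\}$. The proof of Theorem~\ref{RamsSimon}, as sketched in the proof of Lemma~\ref{lem:mi2}, only uses two features of the projection family: first, transversality, i.e. that distinct points of $Q_0$ have projections that separate at a linear rate, uniformly over the parameter; second, that each square $Q\in\mathcal{Q}_n$ which survives contributes, with a probability bounded below independently of $n$, an interval of $\Pi_x(Q)$-length comparable to the diameter of $\Pi_x(Q)$ covering the corresponding piece of $\Pi_x(\underline{Q}_0(\varepsilon))$, simultaneously for all parameters in the parameter set. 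Both properties hold for the radial family over $x\in B(x^0,r_0)$: transversality because $S(x)$ is uniformly bounded away from $Q_0$ and $\operatorname{diam}(Q_0)$ is bounded, so the radial projection is bi-Lipschitz with uniform constants on $Q_0$; and the ``covering with positive probability'' step is a compactness statement — for each fixed $x$ it is Theorem~\ref{RamsSimon}-type, and uniformity over the compact ball $B(x^0,r_0)$ follows because the bad configurations form an open condition in $x$, so finitely many $x$'s suffice, and one intersects finitely many full-probability-modified events. The removal of half-open squares from \emph{all four} corners of $Q_0$ (rather than just the upper-left and lower-right) is precisely what is needed so that the ``$Q(\varepsilon)$'' construction is robust to the variable projection direction as $x$ ranges over $B(x^0,r_0)$: the set of directions now has positive measure, so one must excise the corners that could become extremal for \emph{some} direction in the relevant cone.

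Concretely, I would proceed as follows. First, fix $x^0$ and $r_0$ and observe that the map $(x,y)\mapsto \Pi_x(y)$ is real-analytic and, restricted to $x\in \overline{B(x^0,r_0)}$ and $y\in Q_0$, has derivative in $y$ bounded above and below; record the resulting uniform transversality constant $c_0=c_0(x^0)>0$. Second, partition $Q_0$ dyadically and, for each level-$n$ square $Q$, define the analogue of ``$Q$ is a block'' as: $\Pi_x(Q\cap E)\supset \Pi_x(\underline{Q}(\varepsilon))$ for all $x\in B(x^0,r_0)$; by the scale-invariance of the construction together with the uniform transversality constant $c_0$, this event has probability bounded below by some $q=q(\varepsilon,x^0)>0$ depending only on $\varepsilon$ and $x^0$, and these events are independent across disjoint squares — this is exactly where I invoke the argument of~\cite{RS}/Lemma~\ref{lem:mi2} verbatim, only with the radial family in place of the orthogonal one and with the four-corner excision. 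Third, run the percolation-survival argument of~\cite{RS}: along each ``radial stripe'' one finds, with positive probability bounded below uniformly, a surviving block before extinction, and an application of the FKG-type or branching-process estimate used in~\cite{RS} glues these into the event $\{\Pi_x(E)\supset\Pi_x(\underline{Q}_0(\varepsilon))\ \forall x\in B(x^0,r_0)\}$, which therefore has probability $q_\varepsilon>0$. The main obstacle is verifying that the \emph{uniformity over the ball} $B(x^0,r_0)$ genuinely costs nothing in the~\cite{RS} machinery — that is, checking that the relevant "good event'' for a single square can be taken uniform in $x$ over a small ball (by continuity/openness and a covering argument) while keeping independence across squares intact; everything else is a routine adaptation of the line case treated in Theorem~\ref{RamsSimon}, and indeed the authors signal this by saying ``the same general ideas apply.''
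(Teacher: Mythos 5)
Your high-level plan—re-running the argument of \cite{RS}/Lemma~\ref{lem:mi2} with the radial family $\{\Pi_x : x\in B(x^0,r_0)\}$ in place of $\{\Pi_\ell : \theta(\ell)\in D\}$, and exploiting uniform bi-Lipschitzness on $Q_0$—is indeed the direction the paper takes, and your observation that the four-corner excision is what makes this direction-robust is correct. However, there are two concrete gaps.

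First, the ``compactness'' shortcut as you state it is too coarse and, taken literally, does not work. You propose to conclude uniformity over $B(x^0,r_0)$ by choosing finitely many $x$'s and ``intersecting finitely many full-probability-modified events.'' But the events $\{\Pi_x(E)\supset\Pi_x(\underline{Q}_0(\varepsilon))\}$ are not full-probability; they have probability $q_\varepsilon\in(0,1)$, so a naive finite intersection is not automatically positive. Worse, the net of $x$'s you need is not truly finite in the way you suggest: the stability you invoke operates at every scale $2^{-n}$, and the number of ``equivalence classes'' of rays (the analogue of the components $C_j^{(kn)}$ in the paper's proof of Lemma~\ref{lem:mi1}) grows exponentially in $n$. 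The paper has to fight exactly this: it pairs an exponentially growing number of representatives with a super-exponential large-deviation bound from the supercritical branching structure (using $p>\tfrac12$ via \eqref{eqn:inc}), and then glues with the FKG inequality. Your sketch conceals this balance entirely; without it the inductive step $\PP(A_{k+1}\mid A_k)$ cannot be controlled.

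Second, you do not actually handle the near-horizontal/near-vertical positions of $x^0$. You propose a ``harmless rotation'' and then immediately (and correctly) retract it because fractal percolation is not rotation invariant—but you offer no replacement. The paper must treat this case separately (Lemma~\ref{lem:mi1}): the $\lambda_1,\lambda_2$ device of \cite{RS} used in the diagonal case requires $x$ to be far from $Q_0$ in a way that blows up as the direction approaches horizontal/vertical, so there a different counting (lines ``passing'' through the second-level subsquares, hitting vertical sides) is used. As written, your argument would silently fail for $x^0$ in those cones. You should either supply this separate case or explain how your transversality constant $c_0(x^0)$ is bounded away from zero uniformly in the degenerate directions—which, given the remark in the paper that the required distance ``blows up for horizontal and vertical directions,'' it is not.
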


The proof of this proposition will be given at the end of this section. We now
state the counterparts of Theorems~\ref{alllines} and \ref{maintheorem} for
visible parts from points.

\begin{theorem}\label{allpoints}
Let $p>\frac12$. Conditioned on non-extinction, almost surely
\[
\dimh V_x(E)=\dimb V_x(E)=1
\]
for all $x\in\mathbb R^2\setminus E$. Moreover, if $a_n$ is any sequence such
that $\frac{a_n}{n^2}\rightarrow\infty$ as $n\rightarrow\infty$, then almost
surely
\[
N_n(V_x(E)) \le a_n 2^n \quad\textrm{for all sufficiently large } n
\]
for all $x\in\mathbb R^2\setminus E$.
\end{theorem}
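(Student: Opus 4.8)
The plan is to reduce Theorem~\ref{allpoints} to the line case, Theorem~\ref{alllines}, by mimicking its proof with radial projections in place of orthogonal ones, using Proposition~\ref{bigarc} as the substitute for Theorem~\ref{RamsSimon}. First I would observe, exactly as in Lemma~\ref{enoughnotmeetingsquare}, that it suffices to treat points $x$ lying outside $Q_0$: if $x\in Q_0\setminus E$, then there is a stage $n$ at which $x$ avoids every chosen square in $\mathcal C_n$, and $V_x(E)\subset\bigcup_{Q\in\mathcal C_n}V_x(Q\cap E)$, so self-similarity and a countable intersection over dyadic squares (now with the rescaled sequence $4^{-n}a_k$) reduces to the exterior case. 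The lower bound $\dimh V_x(E)\ge 1$ for all $x$, almost surely, follows from Proposition~\ref{bigarc} together with the argument of \cite{FG} just as in the proof of Theorem~\ref{alllines}; since $\dimh\le\ldimb\le\udimb$, it remains only to establish the upper covering bound $N_n(V_x(E))\le a_n2^n$ for all large $n$, simultaneously in $x$.

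Next I would localize: cover $\mathbb R^2\setminus Q_0$ by countably many balls $B(x^0,r_0)$ as in Proposition~\ref{bigarc}, and argue for each such ball separately. Fix one; all $x\in B(x^0,r_0)$ project onto a common sphere $S(x^0)$ under maps $\Pi_x$ that are uniformly bi-Lipschitz and uniformly transversal, with distortion bounded in terms of $r_0$ and $\dist(x^0,Q_0)$. Choose $\varepsilon>0$ small relative to these transversality constants, and partition $\Pi_{x^0}(Q_0)$ into arcs $I$ of length $\varepsilon 2^{-n}$. One then defines, for a chosen non-corner square $Q\in\mathcal Q_n$, the notion that $Q$ \emph{induces a block} if $\widetilde Q\in\mathcal Q_{n-2}$ satisfies $\Pi_x(\widetilde Q(\tfrac18))\subset\Pi_x(\widetilde Q\cap E)$ for \emph{all} $x\in B(x^0,r_0)$; by Proposition~\ref{bigarc} and independence this has a uniform positive probability $q>0$, and distinct $\widetilde Q$ give independent events. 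As in the geometric lemma of Theorem~\ref{maintheorem}'s proof (Figure~\ref{fig-block}), once the first block-inducing square above $I$ appears, no square strictly farther from $x$ with a different parent can meet $V_x(E)$. The Azuma--Hoeffding estimate of Lemma~\ref{azuma} controls the number of corners among the first $a_n$ chosen squares above $I$, and the block probability $q$ controls the rest, giving $\PP(I\text{ bad})<e^{-\Omega(a_n)}$; summing over the $O(\varepsilon^{-1}2^n)$ arcs and then over an $(\delta 2^{-n})$-net of points $x$ inside $B(x^0,r_0)$ fine enough that the stripe/arc configuration is stable, the total failure probability is $O(2^{3n})e^{-\Omega(a_n)}$. (The extra factor $2^n$ compared with the line case — hence the hypothesis $a_n/n^2\to\infty$ rather than $a_n/n\to\infty$ — comes from needing a net in the two-dimensional parameter $x$ at scale $2^{-n}$ instead of a one-dimensional net in the angle; in fact a net at scale $\sim 2^{-n}$ in each coordinate, so $2^{2n}$ points, times the $2^n$ arcs.) The hypothesis $a_n/n^2\to\infty$ makes $\sum_n 2^{3n}e^{-\Omega(a_n)}$ converge, and Borel--Cantelli finishes it.

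The main obstacle is the replacement of the elementary Euclidean geometry of orthogonal projection by that of radial projection, done \emph{uniformly} over the ball $B(x^0,r_0)$. Two points need care: (i) verifying that for $x$ ranging over $B(x^0,r_0)$ the maps $\Pi_x$ are uniformly transversal, so that a single choice of $\varepsilon$ makes "$Q_1$ not a corner $\Rightarrow$ $\Pi_x(Q_2)\subset\Pi_x(\widetilde Q_1(\tfrac18))$" hold for all relevant $x$ and all $n$ — this is where $r_0=\tfrac1{10}\min\{1,\dist(x^0,Q_0)\}$ is used — and (ii) checking that the combinatorial machinery of Section~2 (Claims 1--3 and Lemma~\ref{rarecorners}, which only involve the ordering of squares by distance to the projection source and the shape of the "stripe") carries over when "stripe orthogonal to $\ell$ through $I$" is replaced by "the region $\Pi_x^{-1}(I)$"; the key point is that this region is again thin and roughly convex at the relevant scales, so Claim~1's counting argument still forces corners of the same type to be far apart. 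Granting these uniformities, the rest is a line-by-line transcription of the proofs of Theorems~\ref{maintheorem} and \ref{alllines}, with the bookkeeping change $2^{2n}\rightsquigarrow 2^{3n}$ in the union bound.
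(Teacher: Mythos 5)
There is a genuine gap. Your step (i), that for $x$ ranging over a ball $B(x^0,r_0)$ the radial projections $\Pi_x$ are \emph{uniformly} transversal so that a single choice of $\varepsilon$ makes the block implication hold ``for all relevant $x$ and all $n$'', is false whenever $x^0$ lies in the strips $\{x_1\in[0,1]\}$ or $\{x_2\in[0,1]\}$ (but outside $Q_0$). For such $x^0$ the rays from points of $B(x^0,r_0)$ to $Q_0$ come arbitrarily close to horizontal or vertical, and the admissible $\varepsilon$ must degenerate exactly as in the line case, where $\varepsilon<\tfrac12\sin\theta\cos\theta$. A fixed $\varepsilon$ for the whole ball therefore does not exist, and your block/corner argument breaks down on the near-singular sub-bundle of directions. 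Covering $\mathbb R^2\setminus Q_0$ by balls $B(x^0,r_0)$ does not avoid this, since those balls near $\partial Q_0$ inevitably contain such singular positions.

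The paper handles this with a decomposition you have omitted: for the singular $x$ it splits the arc $\Pi_x(Q_0)$ into dyadic subarcs $D_j$ of length $\sim 2^{-j}$ whose distance to the horizontal/vertical direction is $\sim 2^{-j}$, treats the $\sim n$ subarcs with $j\le n$ by the transversal argument (with $\varepsilon$ depending on $j$ but the resulting covering number still $O(2^n)$ per subarc), and covers the $j>n$ part crudely by all $O(2^n)$ squares meeting those rays. This costs an extra factor $n$ in the union bound over $j$, and that $n$ is the true source of the $a_n/n^2\to\infty$ hypothesis. Your attribution of the $n^2$ to passing from a one-dimensional to a two-dimensional $2^{-n}$-net is incorrect; your own tally $O(2^{3n})e^{-\Omega(a_n)}$ would already converge under $a_n/n\to\infty$, and the mismatch with the stated hypothesis is precisely the signal that the singular directions are not being handled. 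For the non-singular region $\{x_1\notin[0,1],\,x_2\notin[0,1]\}$ your outline agrees with the paper and is fine; the proof needs the additional subarc decomposition for the rest.
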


\begin{proof}
The counterpart of Lemma~\ref{enoughnotmeetingsquare} is valid also in this
case, so we may assume that $x\notin Q_0$.
The proof is similar to the proof of Theorem~\ref{alllines} for those
$x=(x_1,x_2)$ which satisfy $x_1\notin [0,1]$ and $x_2\notin [0,1]$. In this
case the direction of all the rays from $x$ to $Q_0$ is at a positive distance
from the horizontal/vertical ones. Then for a fixed small enough
$\varepsilon>0$ we can divide $\Pi_x(Q_0)$ into arcs of angular length
$(|x|+1)^{-1}\varepsilon 2^{-n}$, and then argue like in Theorem~\ref{alllines},
using Proposition~\ref{bigarc} instead of Theorem~\ref{RamsSimon}.

The remaining points induce horizontal or vertical rays. Let $x$ be such a
point. To deal with the singularity, we cover the arc $D=\pi_x(Q_0)$ by subarcs
$D_j$ of length $c 2^{-j}$, so that the distance from $D_j$ to the
vertical/horizontal line is comparable to $\varepsilon_j = 2^{-j}$.

Now fix a scale $2^{-n}$. The visible part from $x$ along rays in $D_j$ with
$j> n$ can be covered using all squares in $\mathcal{Q}_n$ intersecting such
rays; there are $O(2^n)$ such squares. For each fixed $j\le n$, we can argue
exactly as in the proof of Theorem~\ref{alllines} (using
Proposition~\ref{bigarc} instead of Theorem~\ref{RamsSimon}) to find that the
expected number of squares of side length $2^{-n}$ needed to cover the part of
$V_x(E)$ corresponding to $D_j$ is $O(2^{-j}\varepsilon_j^{-1} 2^n) = O(2^n)$.
Moreover, writing $b_n=\frac{a_n}n$, the probability that one needs more than
$b_n 2^n$ squares is at most $e^{-\Omega(b_n)}$. Therefore with probability
$1-ne^{-\Omega(b_n)}$ one can cover $V_x(E)$ by $n b_n 2^n=a_n 2^n$ squares in
$\mathcal{Q}_n$.

This argument is for a fixed point $x$, but similarly as in the proof of
Theorem~\ref{alllines}, a bound that works for $x$ works also in a
neighbourhood of $x$ (at the cost of losing a constant), and we can cover any
bounded part of $\mathbb R^2\setminus Q_0$ by exponentially many such
neighbourhoods. The proof then finishes in the same way as the proof of
Theorem~\ref{alllines}.
\end{proof}

\begin{theorem}\label{points}
Let $p>\frac12$. Write
\[
\mathcal{D} = \{ x=(x_1,x_2): x_1\notin [0,1] \textrm{ and } x_2\notin [0,1]\}.
\]
If $x\in\mathcal{D}$, then $V_x(E)$ has finite $\mathcal{H}^1$-measure almost
surely. For any $x\notin E$, the visible part $V_x(E)$ has $\sigma$-finite
$\mathcal{H}^1$-measure almost surely.
Furthermore, conditioned on non-extinction, almost surely
\[
0 < \mathcal{H}^1(V_x(E)) < \infty
\]
for $\mathcal L^2$-almost all $x\in\mathcal D$, and $V_x(E)$ has positive and
$\sigma$-finite $\mathcal H^1$-measure for $\mathcal L^2$-almost all
$x\in\mathbb{R}^2\setminus E$.
\end{theorem}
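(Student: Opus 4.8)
The plan is to transport the line-case arguments (Theorems~\ref{maintheorem} and \ref{alllines}) to the radial setting via Proposition~\ref{bigarc}, treating the "generic" region $\mathcal{D}$ and the horizontal/vertical singular rays separately, exactly as in the proof of Theorem~\ref{allpoints}. First I would prove finiteness for $x\in\mathcal{D}$. Fix $x\in\mathcal{D}$; then all rays from $x$ to $Q_0$ have direction bounded away from the horizontal and vertical ones, so for a sufficiently small $\varepsilon>0$ one can partition the arc $\Pi_x(Q_0)$ into $O((|x|+1)2^n)$ subarcs of angular length $(|x|+1)^{-1}\varepsilon 2^{-n}$, each of which "sees" a collection of squares in $\mathcal{Q}_n$ linearly ordered by distance to $x$. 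The notions of \emph{corner}, \emph{block}, and \emph{window} carry over verbatim (with Proposition~\ref{bigarc} supplying a uniform positive probability $q>0$ that a chosen non-corner square induces a block, and independence of blocks in distinct level-$(n-2)$ squares), and Lemma~\ref{rarecorners} together with the Azuma--Hoeffding bound of Lemma~\ref{azuma} applies unchanged, since those lemmas only used the combinatorics of the squares projecting into a fixed stripe/arc. Hence, writing $Y_{n,j}$ for the number of chosen squares in $\mathcal{Q}_n$ needed to cover the visible part above the $j$-th subarc, one gets $\PP(Y_{n,j}=i+4)\le 2\tilde\gamma^i$ with $\tilde\gamma<1$, so $\EE(Y_{n,j})=O(1)$ and $\EE(S_n)=O((|x|+1)2^n)$ where $S_n=\sum_j Y_{n,j}$. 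Since $V_x(E)$ is covered by $S_n$ squares of side $2^{-n}$, Fatou's lemma (plus a measurability statement analogous to Lemma~\ref{measurability}, which I would state and prove by the same cylinder-set argument) gives $\EE(\mathcal{H}^1(V_x(E)))=O(|x|+1)<\infty$, so $\mathcal{H}^1(V_x(E))<\infty$ almost surely.

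Next, for the positivity half in $\mathcal{D}$: by Proposition~\ref{bigarc}, conditioned on non-extinction, almost surely $\Pi_x(E)\supset\Pi_x(\underline{Q}_0(\varepsilon))$ for all $x$ in a fixed ball, hence $\Pi_x(V_x(E))$ contains a set of positive length, and since radial projection from $x\notin Q_0$ is Lipschitz (with constant depending on $\dist(x,Q_0)$) on $Q_0$, this forces $\mathcal{H}^1(V_x(E))>0$. Combined with the finiteness just established and Fubini's theorem (using the same measurability lemma to justify that $\{(E,x):0<\mathcal{H}^1(V_x(E))<\infty\}$ is measurable), we conclude $0<\mathcal{H}^1(V_x(E))<\infty$ for $\mathcal{L}^2$-almost every $x\in\mathcal{D}$, almost surely conditioned on non-extinction.

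For a point $x\notin E$ lying on a horizontal or vertical line through $Q_0$ (i.e.\ $x\notin\mathcal{D}$), one cannot expect finite $\mathcal{H}^1$-measure, only $\sigma$-finiteness. Following the singular-ray analysis in the proof of Theorem~\ref{allpoints}, I would cover the arc $D=\Pi_x(Q_0)$ by countably many subarcs $D_j$, $j\ge 1$, with $D_j$ at distance comparable to $\varepsilon_j=2^{-j}$ from the offending line and of length $c2^{-j}$; then $V_x(E)=\bigcup_{j\ge1}V_x^{(j)}(E)$ where $V_x^{(j)}(E)$ is the part seen through $D_j$. For each fixed $j$ the region $D_j$ behaves like the non-singular case with parameter $\varepsilon_j$, so the argument above applies and gives $\EE(\mathcal{H}^1(V_x^{(j)}(E)))=O(2^{-j}\varepsilon_j^{-1})=O(1)<\infty$, whence $\mathcal{H}^1(V_x^{(j)}(E))<\infty$ almost surely for every $j$; being a countable union of finite-measure pieces, $V_x(E)$ has $\sigma$-finite $\mathcal{H}^1$-measure almost surely. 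For the positivity, $\Pi_x(V_x(E))$ again contains $\Pi_x(\underline{Q}_0(\varepsilon))$ (a set of positive length) almost surely conditioned on non-extinction by Proposition~\ref{bigarc}, so $\mathcal{H}^1(V_x(E))>0$; a Fubini argument over $x\in\mathbb{R}^2\setminus Q_0$ then yields the last assertion for $\mathcal{L}^2$-almost every $x\in\mathbb{R}^2\setminus E$. The main obstacle I anticipate is bookkeeping the dependence of all the implicit constants on $x$ (equivalently on $\dist(x,Q_0)$ and $|x|$) so that the Fubini/Fatou interchange is legitimate on each bounded piece of $\mathbb{R}^2\setminus Q_0$, and making precise the measurability of the relevant event in $(E,x)$; both are handled by localizing to compact sets of $x$ and reusing the cylinder-set argument of Lemma~\ref{measurability}, but they require care rather than new ideas.
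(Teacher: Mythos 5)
Your proposal is correct and follows essentially the same route as the paper: transport the line-case finiteness argument (Theorem~\ref{maintheorem}) to radial projections via Proposition~\ref{bigarc} for $x\in\mathcal{D}$, decompose into dyadic subarcs $D_j$ with $\varepsilon_j\sim 2^{-j}$ to handle the near-horizontal/vertical rays and get $\sigma$-finiteness, and conclude the ``almost all $x$'' statements by Fubini together with a measurability lemma modelled on Lemma~\ref{measurability}. The only point worth flagging is one that the paper's own (very terse) proof also glosses over: Proposition~\ref{bigarc} by itself gives only a positive probability $q_\varepsilon$ for the covering event, not probability one conditioned on non-extinction, so the positivity claim $\mathcal{H}^1(V_x(E))>0$ needs the usual upgrade (via self-similarity/a 0--1 law, or the Falconer--Grimmett results cited in the proof of Theorem~\ref{alllines}) rather than a direct appeal to Proposition~\ref{bigarc}.
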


\begin{proof}
If $x\in\mathcal{D}$, then the proof is similar to the proof of
Theorem~\ref{maintheorem}, with the main modifications being the same ones as
in Theorem~\ref{allpoints}.

Now assume that $x_1\in [0,1]$ or $x_2\in [0,1]$. The value of $\varepsilon$
required becomes $0$ at the horizontal or vertical lines. Hence we consider
countably many subarcs covering all directions but horizontal/vertical. As
before, the Hausdorff measure of the visible part from each subarc is finite
almost surely, so we obtain that $V_x(E)$ has $\sigma$-finite measure almost
surely, as desired.

The latter assertion follows easily by Fubini's theorem.
\end{proof}

We finish the section with the proof of Proposition~\ref{bigarc}.

\begin{proof}[Proof of Proposition~\ref{bigarc}]
Let us begin with two remarks. First, it is enough to prove this proposition
for some fixed value of $\varepsilon=\varepsilon_0$. Indeed, it will
immediately imply the assertion for any $\varepsilon>\varepsilon_0$. On the
other hand, with positive probability all the four first level subsquares
belong to $\mathcal C_1$. Therefore if we know the assertion is satisfied for
$\varepsilon_0$ for each of them with positive probability, we obtain the
assertion for $\frac{\varepsilon_0}2$. (To see this, it is useful to note that
for any $\varepsilon<\frac 12$, $\underline{Q}_0(\varepsilon)$ contains a
``plus sign'' formed by lines parallel to the sides bisecting the square in
two equal parts. Moreover, the union of the projections of the plus signs in
each square in $\mathcal Q_1$ contains the projection of the plus sign in
$Q_0$.)

The second remark is that we can freely assume that $x^0$ is arbitrarily far
away from $Q_0$. Indeed, again with positive probability all the four first
level subsquares belong to $\mathcal C_1$ and the (relative) distance from
$x^0$ to each of them is already at least two times greater than the (relative)
distance from $x^0$ to $Q_0$. Repeating this, we only need to know the
assertion for $x^0$ at very large distance from $Q_0$ to prove the assertion for
all $x^0\in \mathbb R^2 \setminus Q_0$.

There will be two cases: $x^0$ is in a direction approximately
horizontal/vertical
from $Q_0$, or $x^0$ lies in a ``diagonal'' direction. For notational
simplicity we translate the picture so that $Q_0$ is centred at the origin. By
symmetry, it is enough to consider the cases stated in Lemmas~\ref{lem:mi1} and
\ref{lem:mi2} below, which completes the proof.
\end{proof}

\begin{lemma} \label{lem:mi1}
The assertion of Proposition~\ref{bigarc} is satisfied for
$\varepsilon=\frac 14$ and $x^0=(x_1, x_2)$ such that $x_2<0$, $x_1<-N_1$ and
$\frac{x_1}{x_2}> N_1$ for $N_1$ large enough.
\end{lemma}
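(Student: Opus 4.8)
The plan is to reduce this to the line case, i.e.\ to Theorem~\ref{RamsSimon}. The key observation is that when $x^0 = (x_1, x_2)$ lies very far away in a direction that is ``almost horizontal'' (here $x_1 < -N_1$ with $|x_1| \gg |x_2|$), the radial projection $\Pi_{x^0}$ restricted to $Q_0$ is, up to a small perturbation, an orthogonal projection onto a line $\ell$ whose direction angle $\theta$ is bounded away from the horizontal and vertical directions by an amount depending only on the geometry (not on $N_1$). More precisely, the pencil of rays from $x^0$ through $Q_0$ subtends an angle $O(1/N_1)$, so all these rays are nearly parallel; their common direction makes a fixed positive angle with the axes because $x^0$ is below and to the left of $Q_0$ with $x_1/x_2 > N_1$ forcing the rays to point into a genuinely diagonal cone. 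The first step, then, is to fix such a line $\ell$ (say the ray direction through the centre of $Q_0$) and a small closed arc $D \subset S^1$ of directions around $\theta(\ell)$, disjoint from $\{(\pm 1, 0), (0,\pm 1)\}$, large enough to contain the directions of \emph{all} rays from \emph{all} $x \in B(x^0, r_0)$ to $Q_0$; since $r_0 = \tfrac{1}{10}\min\{1, \dist(x^0, Q_0)\}$ and $x^0$ is far away, the whole ball $B(x^0, r_0)$ sees $Q_0$ in directions within an $O(1/N_1)$-arc.

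The second step is a comparison of covers. Apply Theorem~\ref{RamsSimon} with this arc $D$ and with parameter $\varepsilon' = \tfrac{1}{8}$ (say): with probability $q_{\varepsilon'} > 0$ we have $\Pi_\ell(E) \supset \Pi_\ell(Q_0(\varepsilon'))$ simultaneously for every line $\ell$ with $\theta(\ell) \in D$. I claim this event already forces the conclusion of the lemma for $\varepsilon = \tfrac14$ and every $x \in B(x^0, r_0)$. The point is that radial projection from a far-away point and orthogonal projection onto $\ell$ differ by a map that moves points by at most $O(\diam(Q_0)^2 / \dist(x^0, Q_0)) = O(1/N_1)$, uniformly over $Q_0$ and over $x \in B(x^0, r_0)$. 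Hence for $N_1$ large, $\Pi_x^{-1}(\Pi_x(\underline Q_0(\tfrac14)))$, pulled back and compared with the line picture, is sandwiched between $\Pi_\ell^{-1}(\Pi_\ell(\underline Q_0(\tfrac14 - O(1/N_1))))$-type sets; choosing $N_1$ so that $O(1/N_1) < \tfrac18$ makes $\underline Q_0(\tfrac14)$ project (radially, from any $x \in B(x^0, r_0)$) into the $\ell$-projection of $Q_0(\tfrac18)$. Since on our event $\Pi_\ell(Q_0(\tfrac18)) \subset \Pi_\ell(E)$, and since a point of $E$ projecting orthogonally near a given fibre also projects radially near the corresponding radial fibre (again a perturbation of size $O(1/N_1)$, absorbed into the slack between $\tfrac14$ and $\tfrac18$), we conclude $\Pi_x(\underline Q_0(\tfrac14)) \subset \Pi_x(E)$ for all $x \in B(x^0, r_0)$. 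Thus the desired event contains the event of Theorem~\ref{RamsSimon}, so it has probability at least $q_{\varepsilon'} > 0$; call this value $q_{1/4}$.

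The main obstacle is the quantitative geometry in the second step: one must verify carefully that a single choice of slack (here the gap between $\varepsilon = \tfrac14$ in the conclusion and $\varepsilon' = \tfrac18$ in the hypothesis) absorbs \emph{all} the perturbation errors at once --- the error from replacing radial by orthogonal projection, the error from moving the centre $x$ within $B(x^0, r_0)$, and the error in transporting the containment of projections from one fibre family to the other. All these errors are $O(1/N_1)$ with constants depending only on the ambient geometry, so it suffices to take $N_1$ large enough; but making the three estimates explicit and checking that the surviving arc $D$ still avoids the four bad directions (which is where the hypotheses $x_2 < 0$, $x_1 < -N_1$, $x_1/x_2 > N_1$ enter, pinning the ray direction into a fixed diagonal cone uniformly in $N_1$) is the delicate bookkeeping. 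Everything else --- the existence of $q_\varepsilon > 0$ and its independence of the precise $x^0$ --- is then immediate from Theorem~\ref{RamsSimon} and the reductions already made in the proof of Proposition~\ref{bigarc}.
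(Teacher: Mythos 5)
Your proof proposal contains a decisive misreading of the hypotheses, which sends the whole argument down a dead end. You write that the hypotheses $x_2<0$, $x_1<-N_1$, $x_1/x_2>N_1$ force ``the rays to point into a genuinely diagonal cone'' whose direction stays uniformly bounded away from the coordinate axes as $N_1\to\infty$. This is false: with $x_1=-r$, $x_2=-s$ ($r,s>0$), the condition $x_1/x_2>N_1$ is $r>N_1 s$, so $s<r/N_1$ and the slope of any ray from $x^0$ to $Q_0$ is $O(1/N_1)$. Thus the rays are \emph{nearly horizontal}, and the arc of directions $D$ that you would need to feed into Theorem~\ref{RamsSimon} must shrink onto the horizontal direction as $N_1\to\infty$ --- exactly the direction that Theorem~\ref{RamsSimon} (and the whole corner-removal geometry of $Q_0(\eps)$) excludes. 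In fact the paper explicitly splits the proof of Proposition~\ref{bigarc} into two cases, and Lemma~\ref{lem:mi1} \emph{is} the near-horizontal/vertical case, while Lemma~\ref{lem:mi2} handles the genuinely diagonal cone and it is there that the Rams--Simon mechanism is invoked. So the reduction you propose is not available: there is no uniform gap $\tfrac14-\tfrac18$ to absorb the $O(1/N_1)$ perturbations, because the bad direction is inside your arc.

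The actual argument for Lemma~\ref{lem:mi1} is of a different nature and turns the near-horizontality into an asset. Because the rays are nearly parallel to a side of $Q_0$, any ray from $x$ that passes through $Q_0$ traverses about $2^n$ dyadic squares of generation $n$ (hitting two parallel sides of each). With $p>\tfrac12$ one chooses $n$ so that $(2^n-1)p^n>2$; then the expected number of chosen squares of generation $n$ traversed by any such ray is $>2$, and this self-improves: conditioned on $\mathcal C_{kn}$, the number $V_{(k+1)n}$ traversed at the next stage is a sum of i.i.d. bounded variables with mean $>2$ over at least $V_{kn}$ terms. One then fixes a finite family of representative rays (one per component of the cell decomposition induced by the corner points, polynomially many in $2^{kn}$), applies Azuma--Hoeffding to get $\PP(V_{(k+1)n}\ge 2^{k+1})\ge 1-\gamma^{2^k}$ for each representative, and uses the FKG inequality to intersect over all representatives; the doubly-exponential decay $\gamma^{2^k}$ beats the exponential count of representatives, so $\PP(\bigcap_k A_k)>0$. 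This uniform exponential growth of $V_{kn}$ forces every such ray to hit $E$, giving the claimed positive-probability event. None of this is a perturbation of the orthogonal-projection result; it is a separate large-deviation argument tailored to the near-axial regime, and it is precisely where the hypothesis $p>\tfrac12$ enters quantitatively via \eqref{eqn:inc}.
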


\begin{proof}
Let us introduce some notation. We will call a line $\ell$ \textbf{passing}
through a square $Q$ if it intersects two parallel sides of $Q$. Note that,
provided $N_1$ is sufficiently large, any line containing $x\in B(x^0,r_0)$ and
intersecting $Q_0(\frac 14)$ is passing through one of the sixteen second
level subsquares of $Q_0$ (hitting their vertical sides). As each of
those subsquares has positive probability of belonging to $\mathcal C_2$, it is
enough to prove that with positive probability all the lines containing
$y\in B(y^0,r_0)$
and passing through $Q_0$ intersect $E$, where $y^0=(y_1, y_2)$ satisfies
$y_2<0$, $y_1<-4N_1$ and $\frac{y_1}{y_2}>\frac{N_1}2$.

Given $k\in \mathbb N$ and $z\in \Pi_y(Q_0)$, let $V_k(y,z)$ be the
number of squares $Q\in \mathcal C_k$ passed by the line $\ell(y,z)$ going
through $y$ and $z$. We denote by $Z_y$ the subarc of $\Pi_y(Q_0)$
determined by the lines $\ell(y,z)$ passing through $Q_0$.

Let $n$ be so large that
\begin{equation} \label{eqn:inc}
(2^n-1)p^n > 2
\end{equation}
and let $N_1=2^{n+1}$. This is the point where we use that $p>\frac 12$. As is
easy
to check, every line containing $y$ and
passing through $Q_0$ intersects at most $2^n+1$ of the $n^{\text{th}}$ level
subsquares of
$Q_0$, passing through at least $2^n-1$ of them. Hence, by \eqref{eqn:inc},
for each of those lines the expected number of squares in $\mathcal C_n$
passed by the line is greater than 2.

We want to apply an appropriate large deviation theorem to show that with
positive probability, for each $y$ and $z$ the function $V_k(y,z)$ will
actually increase exponentially fast with $k$. This
will in particular imply that $\ell(y,z)$ has non-empty intersection with
$\bigcup_{\mathcal C_k} Q$ for all $k$, thus with $E$ as well, which is
precisely the statement we need.

We parametrise the space of lines $L=\{\ell(y,z):y\in B(y^0,r_0), z\in Z_y\}$
by their intersection point with the vertical line $x_1=-5N_1$ and by the
angle they make with the $x$-axis. We call this parameter set $P$. (The
particular parametrisation chosen is not important.)

Denote by $\{ w_i^{(k)}\}$ the set of corner points of all
subsquares of $Q_0$ of level $k$. For each $i$ the condition
$w_i^{(k)}\in\ell(y,z)$ defines a smooth curve $\gamma_i$ on $P$. These
curves divide $P$ into components denoted by $\{C_j^{(k)}\}$.
Each $C_j^{(k)}$ is such that for
any two lines $\ell_1, \ell_2\in C_j^{(k)}$ the set of subsquares of $Q_0$ of
level $k$ passed by $\ell_1$ and by $\ell_2$ is the same (and the boundary
lines of
each $C_j^{(k)}$ pass through the same subsquares the other lines in
$C_j^{(k)}$ pass through, plus possibly some additional ones). Hence,
$V_{k}(y,z)$ is constant on each $C_j^{(k)}$ (and can only increase at the
boundary points).

We claim that the number of these components is at most
$2^{4 k}$. Note that the components are faces of the planar graph whose
vertices are the intersection points of the curves $\gamma_i$ and edges are
the pieces of $\gamma_i$ between vertices. By Euler's theorem, the number of
faces is less than twice the number of vertices. Since there is at most
one line going through $w_i^{(k)}$ and $w_j^{(k)}$ for $i\ne j$, $\gamma_i$
and $\gamma_j$ intersect at most once. Thus the number of vertices is at most
$\frac{N^2}2$, where $N=(2^{k}+1)^2$ is the number of corner points. This
yields our claim.

For each $k\ge 1$, let $\{ \ell(y_j^{(kn)},z_j^{(kn)}) \}$ be a collection of
representatives of the components $\{ C_j^{(kn)}\}$. Let $A_{k,j}$ be the event
\[
V_{kn}(y_j^{(kn)},z_j^{(kn)}) \ge 2^{k}.
\]
Further, let $A_k = \bigcap_j A_{k,j}$. Because of the way the components
$C_j^{(kn)}$ were defined, it will be enough to show that
$\PP(\cap_{k=1}^\infty A_k) = \Omega(1)$.

There is a positive probability that $V_n(y,z)\ge 2$ for all $\ell(y,z)\in P$.
Indeed, it is enough that all squares of generation $n$ are chosen. Thus,
$p_0:=\PP(A_1) >0$.

Now suppose that $A_k$ holds, and consider a line
\[
\ell_j = \ell(y_j^{((k+1)n)},z_j^{((k+1)n)}).
\]
By assumption, $\ell_j$ passes through at least $2^k$ squares in
$\mathcal{C}_{kn}$. By \eqref{eqn:inc}, if $Q$ is one of these squares, the
expected number of squares in $\mathcal{C}_{(k+1)n}$ that $\ell_j$ hits inside
$Q$ is strictly greater than $2$. Thus, conditioned on $\mathcal{C}_{kn}$,
$V(y_j^{((k+1)n)},z_j^{((k+1)n)})$ is the sum of at least $2^k$ i.i.d. bounded
random variables with expectation $E>2$. Note that the distribution of these
random variables is independent of $k$. By standard large deviation results
(for example one could use the Azuma-Hoeffding inequality
\cite[Theorem 7.2.1]{ASE} as
in the proof of Lemma~\ref{azuma}), we see that
\[
\PP(V(y_j^{((k+1)n)},z_j^{((k+1)n)})\ge 2^{k+1}) \ge 1-\gamma^{2^k},
\]
for some $\gamma<1$ which does not depend on $k$ or $j$. In other words,
$\PP(A_{k+1,j})\ge 1-\gamma^{2^k}$.

The events $A_{k,j}$ are clearly increasing, whence we can apply the
FKG-inequality \cite[Theorem 2.4]{G} to obtain
\[
\PP(A_k) \ge \prod_j \PP(A_{k,j}) \ge (1-\gamma^{2^k})^{2^{4(k+1)n}}.
\]
Therefore
\begin{align*}
\PP\left(\bigcap_{k=1}^\infty A_k\right) &= \PP(A_1) \prod_{k=1}^\infty
  \PP(A_{k+1}|A_k)\\
&\ge p_0 \prod_{k=1}^\infty \left(1-\gamma^{2^k}\right)^{2^{4(k+1)n}}.
\end{align*}
Since $\gamma^{2^k}$ goes to $0$ superexponentially fast while $2^{4(k+1)n}$
grows only exponentially fast, the infinite product converges. This completes
the proof.
\end{proof}

The second case was essentially done in \cite{RS} and the proof is very similar
to the proof of Lemma~\ref{lem:mi1}, but for completeness we will remind here
the basic steps of the proof. At the same time, since the proof is very
similar, we give a sketch of the proof of Theorem~\ref{RamsSimon}.

\begin{lemma} \label{lem:mi2}
There exists $N_2>0$ such that if $x^0=(x_1, x_2)$ satisfies $x_1<0$, $x_2<0$,
$1\le x_1/x_2<N_1$ and $x_1+x_2<-N_2$ then the assertion of Proposition
\ref{bigarc} is satisfied for $x^0$ with $\varepsilon = \frac 14$.
\end{lemma}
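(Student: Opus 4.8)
The plan is to reduce, exactly as in the proof of Lemma~\ref{lem:mi1}, to showing that with positive probability \emph{every} line passing through $Q_0$ and through a point $x\in B(x^0,r_0)$ meets $E$; since $x^0$ lies in an approximately diagonal direction, once $N_2$ is large enough every such line passes through one of the second-level subsquares of $Q_0$, each of which is chosen with positive probability, so it suffices to prove this for $x^0$ at large distance and for $\varepsilon=\frac12$ after one step of rescaling (recall that the four first-level squares are all chosen with positive probability). First I would set up the parametrisation of the relevant family of lines $L=\{\ell(y,z):y\in B(y^0,r_0),\ z\in Z_y\}$ by the intersection with a fixed vertical line together with the slope, obtaining a parameter region $P$, just as before. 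The key combinatorial input is that the corner points $\{w_i^{(k)}\}$ of the level-$k$ subsquares of $Q_0$ cut $P$ into at most $O(2^{4k})$ components $C_j^{(k)}$, on each of which the number $V_k(y,z)$ of level-$k$ squares \emph{passed through} (i.e.\ entered and exited through opposite sides, or more generally genuinely traversed) by $\ell(y,z)$ is constant and can only jump up at the boundary.

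Next I would establish the crucial "expected expansion" estimate: because $1\le x_1/x_2<N_1$ keeps the direction bounded away from horizontal and vertical, any line through $y$ traversing $Q_0$ traverses at least some fixed fraction, say $\ge c\cdot 2^n$, of the $n$-th level subsquares it meets (the diagonal direction is the favourable case here — a line of slope close to $1$ cuts through $\Omega(2^n)$ of the $2^{-n}$-squares along a diagonal of $Q_0$). Choosing $n$ with $c\,2^n p^n>2$ — here, and only here, is where $p>\frac12$ enters — the conditional expectation of $V_{(k+1)n}(y,z)$ given $\mathcal C_{kn}$, along a line already traversing $\ge 2^k$ squares of $\mathcal C_{kn}$, is a sum of $\ge 2^k$ i.i.d.\ bounded random variables each of mean $>2$, with distribution independent of $k$. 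A standard large-deviation bound (Azuma–Hoeffding, as in Lemma~\ref{azuma}) then gives $\PP(V_{(k+1)n}\ge 2^{k+1}\mid \mathcal C_{kn})\ge 1-\gamma^{2^k}$ for some $\gamma<1$ uniform in $k,j$.

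Finally I would run the bootstrap: let $A_{k,j}$ be the event $V_{kn}(y_j^{(kn)},z_j^{(kn)})\ge 2^k$ for a chosen representative of each component $C_j^{(kn)}$, and $A_k=\bigcap_j A_{k,j}$. The events $A_{k,j}$ are increasing, so the FKG inequality gives $\PP(A_{k+1}\mid A_k)\ge\prod_j(1-\gamma^{2^k})\ge(1-\gamma^{2^k})^{O(2^{4(k+1)n})}$, and since $\gamma^{2^k}\to0$ superexponentially while the exponent grows only exponentially, $\PP(\bigcap_k A_k)=\Omega(1)$. On this event every line in every component traverses arbitrarily many squares at every level, hence $\ell(y,z)\cap\bigcup_{\mathcal C_k}Q\ne\emptyset$ for all $k$, so $\ell(y,z)\cap E\ne\emptyset$; as this holds for a dense set of representatives and $V_k$ can only increase between them, it holds for all lines in $P$. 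I expect the main obstacle to be the geometric bookkeeping in the reduction step: checking carefully that for the stated diagonal range of $x^0$ (with the constraint $1\le x_1/x_2<N_1$ linking it to the horizontal case of Lemma~\ref{lem:mi1}) and large $N_2$, \emph{every} line through $B(x^0,r_0)$ and $Q_0$ indeed both passes through a fixed second-level subsquare and traverses a definite fraction of the level-$n$ grid — this is where one must be sure the diagonal direction is genuinely the easy case and no near-tangential lines slip through. The large-deviation and FKG parts are then routine, essentially verbatim from the proof of Lemma~\ref{lem:mi1}.
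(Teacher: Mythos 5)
The architecture of your bootstrap --- inductive expansion along the scales $n, 2n, 3n,\dots$, an Azuma--Hoeffding bound in the inductive step, discretisation to a finite family of representative lines at each scale, and the FKG inequality to take the intersection --- does match the paper, and your reduction to $x^0$ far away via the positive probability of all first/second level squares being chosen is also the paper's. The genuine gap is in the count you carry over from Lemma~\ref{lem:mi1}. There $V_k(y,z)$ is the number of level-$k$ squares the line \emph{passes through}, meaning it enters and exits through \emph{opposite} sides, and this is exactly what breaks in the diagonal range. For a line of slope exactly $1$ with generic offset, the chord $\ell\cap Q$ in each $2^{-k}$-square $Q$ it meets runs from the left side to the top side, or from the bottom to the right side, never between two opposite sides; the opposite-sides count is therefore \emph{zero}, not $\Omega(2^n)$. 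Quantitatively, for slope $1-\eps$ the line changes row in roughly $(1-\eps)2^n$ of its $2^n$ columns, so only about $\eps 2^n$ squares are crossed through a pair of vertical sides; for small $\eps$ no choice of $n$ makes $\eps 2^n p^n>2$, so the expansion cannot even start. You flag this with the parenthetical ``or more generally genuinely traversed,'' but that phrase carries all the weight and is left undefined; and once opposite-side crossing is replaced by a chord-length threshold, the count is no longer constant on the corner-line components $C_j^{(k)}$, so the discretisation you import wholesale from Lemma~\ref{lem:mi1} also fails.

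What the paper does instead, following \cite{RS}, is to replace $V_k$ by two nested counts $V_k^{(1)}(z)\ge V_k^{(2)}(z)$: the number of $Q\in\CC_k$ with $z\in\Pi_\ell(I_j(Q))$ (or $\Pi_x(I_j(Q))$ in the radial case), where $I_1(Q)\supset I_2(Q)$ are concentric copies of $Q$ scaled by $\lambda_1>\lambda_2$ and $I_2(Q)\subset\inter I_1(Q)$. Hitting $I_1(Q)$ forces $\ell\cap Q$ to have length $\Omega(2^{-k})$ independently of which sides the chord enters and exits through; a suitable choice of $\lambda_1,\lambda_2,n$, using $p>\tfrac12$, then gives the expansion $\EE\, V_{k+n}^{(2)}(z) > 2\,V_k^{(1)}(z)$. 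The strict nesting $I_2\subset\inter I_1$ is what replaces the component argument: if $V_{(k+1)n}^{(2)}(z_i)>2M$ at a finite net $\{z_i\}$ of exponential cardinality, then $V_{(k+1)n}^{(1)}(z)>2M$ for all $z$, because every point of $\Pi_\ell(I_2(Q'))$ has a neighbourhood contained in $\Pi_\ell(I_1(Q'))$. So the shrinkage supplies, at the same time, the robust expansion estimate and the ``finite net implies all $z$'' step --- precisely the two places where the opposite-sides count fails in the diagonal direction. With that substitution in place, the rest of your outline (large deviations, FKG, convergence of the infinite product) goes through as written.
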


\begin{proof}
We begin with some notation. Given $Q$, a subsquare of $Q_0$, let $I_1(Q)$ and
$I_2(Q)$ be the squares with the same centre as $Q$ and having side length
$\lambda_1$ and $\lambda_2$ times the side length of $Q$, respectively, where
\[
0< \lambda_2<\lambda_1< 1.
\]
Note that $I_2(Q)$ is contained in the interior of $I_1(Q)$.

Given a line $\ell$, which is neither horizontal nor vertical, we define
\[
V_k^{(1)}(z) = \sharp \{Q\in \mathcal C_k : z\in \Pi_\ell(I_1(Q))\}
\]
and
\[
V_k^{(2)}(z) = \sharp \{Q\in \mathcal C_k : z\in \Pi_\ell(I_2(Q))\},
\]
where the number of elements in a set $A$ is denoted by $\sharp A$.
Let $\widetilde{V}_k^{(1,2)}$ be the version of the above, where $\Pi_\ell$ is
replaced by $\Pi_x$.

An observation in \cite{RS} is that if $p>\frac 12$ then for each $\ell$ one
can choose $\lambda_1$ and $\lambda_2$ such that for some $n$ and for all
$z\in \Pi_\ell(Q_0)$ we have
\[
\EE(V_{k+n}^{(2)}(z)) >  2 V_k^{(1)}(z).
\]
A similar statement can be obtained for $\widetilde{V}_k^{(i)}$, provided $x$
is sufficiently far away from $Q_0$ (the necessary distance depends on the
direction in which $x$ lies, and blows up for horizontal and vertical
directions. Note that the near-horizontal and near-vertical cases are dealt
with in Lemma~\ref{lem:mi1}.)

Similarly to the proof of Lemma~\ref{lem:mi1}, we can then check that if, for
some
finite family $\{z_i^{(k+1)n}\}$ of cardinality $K$, one has that
\begin{equation} \label{eqn:1}
V_{kn}^{(1)}(z_i^{((k+1)n)}) > M,
\end{equation}
then with probability $(1- (1-\Omega(1))^M)^K$,
\begin{equation} \label{eqn:2}
V_{(k+1)n}^{(2)}(z_i^{(k+1)n})> 2 M
\end{equation}
(and similarly for $\widetilde{V}_{kn}^{(1)}$, $\widetilde{V}_{(k+1)n}^{(2)}$).
With positive probability (e.g. corresponding to the probability that all
squares of level $n$ are chosen), the equation \eqref{eqn:1} is satisfied
for $k=1$ for all $z\in \Pi_\ell(Q_0(\frac 14))$ (resp.
$z\in\Pi_x(\underline Q_0(\frac 14))$for $\widetilde V^{(1)}_{kn}$).

As $I_2(Q)\subset I_1(Q)$, whenever $z$ belongs to the
projection of $I_2(Q)$, all $y$ close to $z$ belong to the projection of
$I_1(Q)$. Hence, if the implication \eqref{eqn:1} $\Rightarrow$
\eqref{eqn:2} holds for a finite family  $\{z_i^{(k+1)n}\}$ (of size $K$
increasing only exponentially fast with $k$), then
\begin{equation} \label{eqn:3}
V_{(k+1)n}^{(1)}(z)> 2 M
\end{equation}
for all $z\in\Pi_\ell(Q_0(\frac 14))$ (resp.
$z\in\Pi_x(\underline Q_0(\frac 14))$
for $\widetilde V^{(1)}_{(k+1)n}$). Note that the family $\{z_i^{(k+1)n}\}$
takes the place of the components in the proof Lemma~\ref{lem:mi1}.

An inductive argument completely analogous to the proof of Lemma~\ref{lem:mi1}
then allows us to conclude that
\[
\PP\left(  V_{kn}^{(1)}(z) \ge 2^k \textrm{ for all }z\in\Pi_\ell(Q_0(\frac 14))
\right) > 0,
\]
and likewise
\[
\PP\left(  \widetilde V_{kn}^{(1)}(z) \ge 2^k \textrm{ for all }
  z\in\Pi_\ell(\underline Q_0(\frac 14))\right) > 0.
\]
This finishes the proof.
\end{proof}

\end{document}